\newtheorem{theorem}{Theorem}
\newtheorem{corollary}{Corollary}
\newtheorem{lemma}{Lemma}
\newtheorem{remark}{Remark}
\newenvironment{proof}[1][Proof]{\noindent\textbf{#1.} }{\ \rule{0.5em}{0.5em}}
\begin{document}

\title{\textbf{Pointwise approximation of functions by matrix operators of
their Fourier series with $r$- differences of the entries}}
\author{\textbf{W\l odzimierz \L enski \ and Bogdan Szal} \\
University of Zielona G\'{o}ra\\
Faculty of Mathematics, Computer Science and Econometrics\\
65-516 Zielona G\'{o}ra, ul. Szafrana 4a, Poland\\
W.Lenski@wmie.uz.zgora.pl, B.Szal@wmie.uz.zgora.pl}
\date{}
\maketitle

\begin{abstract}
We extend the results of Xh. Z. Krasniqi [Acta Comment. Univ. Tartu. Math.
17 (2013), 89-101] and the authors [Acta Comment. Univ. Tartu. Math. 13
(2009), 11-24]. to the case where in the measures of estimations there are
used \textbf{\ }$r$ \textbf{- }differences of the entries.

\ \ \ \ \ \ \ \ \ \ \ \ \ \ \ \ \ \ \ \ 

\textbf{Key words: }Rate of approximation, summability of Fourier series

\ \ \ \ \ \ \ \ \ \ \ \ \ \ \ \ \ \ \ 

\textbf{2000 Mathematics Subject Classification: }42A24
\end{abstract}

\section{Introduction}

Let $L^{p}\ (1\leq p<\infty )$ be the class of all $2\pi $--periodic
real--valued functions, integrable in the Lebesgue sense with $p$--th power
over $Q=$ $[-\pi ,\pi ]$ with the norm%
\begin{equation*}
\Vert f\Vert =\Vert f(\cdot )\Vert _{_{L^{p}}}=\left( \int_{_{_{Q}}}\mid
f(t)\mid ^{p}dt\right) ^{1/p}
\end{equation*}%
and consider the trigonometric Fourier series 
\begin{equation*}
Sf(x):=\frac{a_{0}(f)}{2}+\sum_{\nu =1}^{\infty }(a_{\nu }(f)\cos \nu
x+b_{\nu }(f)\sin \nu x)
\end{equation*}%
with the partial sums\ $S_{k}f$ and the conjugate one 
\begin{equation*}
\widetilde{S}f(x):=\sum_{\nu =1}^{\infty }(a_{\nu }(f)\sin \nu x-b_{\nu
}(f)\cos \nu x)
\end{equation*}%
with the partial sums $\widetilde{S}_{k}f$. We know that if $f\in L^{1},$
then 
\begin{equation*}
\widetilde{f}\left( x\right) :=-\frac{1}{\pi }\int_{0}^{\pi }\psi _{x}\left(
t\right) \frac{1}{2}\cot \frac{t}{2}dt=\lim_{\epsilon \rightarrow 0^{+}}%
\widetilde{f}\left( x,\epsilon \right) ,
\end{equation*}%
where 
\begin{equation*}
\widetilde{f}\left( x,\epsilon \right) :=-\frac{1}{\pi }\int_{\epsilon
}^{\pi }\psi _{x}\left( t\right) \frac{1}{2}\cot \frac{t}{2}dt,
\end{equation*}%
with 
\begin{equation*}
\psi _{x}\left( t\right) :=f\left( x+t\right) -f\left( x-t\right) ,
\end{equation*}%
exists for almost all \ $x$ \cite[Th.(3.1)IV]{Z}.

Let $A:=\left( a_{n,k}\right) $ be an infinite matrix of real numbers such
that%
\begin{equation*}
a_{n,k}\geq 0\text{ when \ }k,n=0,1,2,...\text{, \ }\lim_{n\rightarrow
\infty }a_{n,k}=0\text{\ and }\sum_{k=0}^{\infty }a_{n,k}=1,
\end{equation*}%
but $A^{\circ }:=\left( a_{n,k}\right) _{k=0}^{n},$ where 
\begin{equation*}
a_{n,k}=0\text{ when \ }k>n=0,1,2,...\text{, \ }\lim_{n\rightarrow \infty
}a_{n,k}=0\text{\ and }\sum_{k=0}^{n}a_{n,k}=1.
\end{equation*}%
We will use the notations%
\begin{equation*}
A_{n,r}=\sum_{k=0}^{\infty }\left\vert a_{n,k}-a_{n,k+r}\right\vert ,\text{ }%
A_{n,r}^{\circ }=\sum_{k=0}^{n}\left\vert a_{n,k}-a_{n,k+r}\right\vert \text{%
\ }
\end{equation*}%
for $r\in 
\mathbb{N}
$ and%
\begin{equation*}
\left( 
\begin{array}{c}
T_{n,A}^{\text{ }}f\left( x\right) \\ 
\widetilde{T}_{n,A}^{\text{ }}f\left( x\right)%
\end{array}%
\right) :=\sum_{k=0}^{\infty }a_{n,k}\left( 
\begin{array}{c}
S_{k}f\left( x\right) \\ 
\widetilde{S}_{k}f\left( x\right)%
\end{array}%
\right) \text{ \ \ \ }\left( n=0,1,2,...\right) ,
\end{equation*}%
for the\ $A-$transformation$\ $of \ $S_{k}f$ or $\widetilde{S}f,$
respectively.

In this paper, we will study the upper bounds of $\left\vert T_{n,A}^{\text{ 
}}f\left( x\right) -f\left( x\right) \right\vert ,$ $\left\vert \widetilde{T}%
_{n,A}^{\text{ }}f\left( x\right) -\widetilde{f}\left( x\right) \right\vert $
and $\left\vert \widetilde{T}_{n,A}^{\text{ }}f\left( x\right) -\widetilde{f}%
\left( x,\epsilon \right) \right\vert $\ by the functions of modulus of
continuity type, i.e. nondecreasing continuous functions having the
following properties:\ $\omega \left( 0\right) =0,$\ $\omega \left( \delta
_{1}+\delta _{2}\right) \leq \omega \left( \delta _{1}\right) +\omega \left(
\delta _{2}\right) $\ for any\ $0\leq \delta _{1}\leq \delta _{2}\leq \delta
_{1}+\delta _{2}\leq 2\pi .$ We will also consider functions from the
following subclasses $L^{p}\left( \omega \right) _{\beta },$ $L^{p}\left( 
\widetilde{\omega }\right) _{\beta }$ of $L^{p}$

\begin{eqnarray*}
L^{p}\left( \omega \right) _{\beta } &=&\left\{ f\in L^{p}:\omega _{\beta
}f\left( \delta \right) _{L^{p}}=O\left( \omega \left( \delta \right)
\right) \text{ when\ }\delta \in \left[ 0,2\pi \right] \text{ and }\beta
\geq 0\right\} , \\
L^{p}\left( \widetilde{\omega }\right) _{\beta } &=&\left\{ f\in L^{p}:%
\widetilde{\omega }_{\beta }f\left( \delta \right) _{L^{p}}=O\left( 
\widetilde{\omega }\left( \delta \right) \right) \text{ when\ }\delta \in %
\left[ 0,2\pi \right] \text{ and }\beta \geq 0\right\} ,
\end{eqnarray*}%
where \ $\omega $ and $\widetilde{\omega }$\ are functions of the modulus of
continuity type and%
\begin{equation*}
\omega _{\beta }f\left( \delta \right) _{L^{p}}:=\sup_{0\leq \left\vert
t\right\vert \leq \delta }\left\{ \left\vert \sin \frac{rt}{2}\right\vert
^{\beta }\left\Vert \varphi _{.}\left( t\right) \right\Vert _{L^{p}}\right\} 
\text{ with }\varphi _{x}\left( t\right) :=f\left( x+t\right) +f\left(
x-t\right) -2f\left( x\right) ,
\end{equation*}%
\begin{equation*}
\widetilde{\omega }_{\beta }f\left( \delta \right) _{L^{p}}=\sup_{0\leq
\left\vert t\right\vert \leq \delta }\left\{ \left\vert \sin \frac{rt}{2}%
\right\vert ^{\beta }\Vert \psi _{\cdot }\left( t\right) \Vert
_{L^{p}}\right\} ,
\end{equation*}%
where $r\in 
\mathbb{N}
$. It is clear that for $\beta >\alpha \geq 0$%
\begin{equation*}
\widetilde{\omega }_{\beta }f\left( \delta \right) _{L^{p}}\leq \widetilde{%
\omega }_{\alpha }f\left( \delta \right) _{L^{p}}\text{ \ and }\omega
_{\beta }f\left( \delta \right) _{L^{p}}\leq \omega _{\alpha }f\left( \delta
\right) _{L^{p}},
\end{equation*}%
and it is easily seen that $\widetilde{\omega }_{0}f\left( \cdot \right)
_{L^{p}}=\widetilde{\omega }f\left( \cdot \right) _{L^{p}}$ , $\omega
_{0}f\left( \cdot \right) _{L^{p}}=\omega f\left( \cdot \right) _{L^{p}}$
are the classical moduli of continuity.

The above deviations \ were estimated in the paper \cite{LWBSZ} and
generalized in \cite{XK} as follows:

\textbf{Theorem A.} \cite[Theorem 10, p. 97]{XK}\textit{\ Let \ }$f\in
L^{p}\left( \omega \right) _{\beta }$\textit{\ with }$\beta <1-\frac{1}{p}$%
\textit{\ and let }$\omega $\textit{\ satisfy} 
\begin{equation}
\left\{ \int_{\pi /\left( n+1\right) }^{\pi }\left( \frac{t^{-\gamma
}\left\vert \varphi _{x}\left( t\right) \right\vert }{\omega \left( t\right) 
}\right) ^{p}\sin ^{\beta p}\frac{t}{2}dt\right\} ^{1/p}=O_{x}\left( \left(
n+1\right) ^{\gamma }\right) ,  \label{2.6}
\end{equation}%
\begin{equation}
\left\{ \int_{0}^{\pi /\left( n+1\right) }\left( \frac{\left\vert \varphi
_{x}\left( t\right) \right\vert }{\omega \left( t\right) }\right) ^{p}\sin
^{\beta p}\frac{t}{2}dt\right\} ^{1/p}=O_{x}\left( \left( n+1\right)
^{-1/p}\right)  \label{2.7}
\end{equation}%
\textit{and}%
\begin{equation}
\left\{ \int_{0}^{\pi /\left( n+1\right) }\left( \frac{\omega \left(
t\right) }{t\sin ^{\beta }\frac{t}{2}}\right) ^{q}dt\right\} ^{1/q}=O\left(
\left( n+1\right) ^{\beta +1/p}\omega \left( \frac{\pi }{n+1}\right) \right)
,  \label{2.8}
\end{equation}%
\textit{with }$0<\gamma <\beta +\frac{1}{p},$\textit{\ where \ }$q=p\left(
p-1\right) ^{-1}.$\textit{\ Then}%
\begin{equation*}
\left\vert T_{n,A^{\circ }}^{\text{ }}f\left( x\right) -f\left( x\right)
\right\vert =O_{x}\left( \left( n+1\right) ^{\beta +\frac{1}{p}%
+1}A_{n,1}^{\circ }\omega \left( \frac{\pi }{n+1}\right) \right) .
\end{equation*}

\textbf{Theorem B.} \cite[Theorem 8, p. 95]{XK}\ \textit{If }$f\in
L^{p}\left( \widetilde{\omega }\right) _{\beta }$\textit{\ with }$\beta <1-%
\frac{1}{p}$\textit{, where }$\widetilde{\omega }$\textit{\ satisfies the
conditions}%
\begin{equation}
\left\{ \int_{\pi /\left( n+1\right) }^{\pi }\left( \frac{t^{-\gamma
}\left\vert \psi _{x}\left( t\right) \right\vert }{\widetilde{\omega }\left(
t\right) }\right) ^{p}\sin ^{\beta p}\frac{t}{2}dt\right\}
^{1/p}=O_{x}\left( \left( n+1\right) ^{\gamma }\right)  \label{112}
\end{equation}%
\textit{\ and}%
\begin{equation}
\left\{ \int_{0}^{\pi /\left( n+1\right) }\left( \frac{t\left\vert \psi
_{x}\left( t\right) \right\vert }{\widetilde{\omega }\left( t\right) }%
\right) ^{p}\sin ^{\beta p}\frac{t}{2}dt\right\} ^{1/p}=O_{x}\left( \left(
n+1\right) ^{-1}\right)  \label{111}
\end{equation}%
\textit{with}$\ 0<\gamma <\beta +\frac{1}{p}$\textit{, then\ }%
\begin{equation*}
\left\vert \widetilde{T}_{n,A^{\circ }}^{\text{ }}f\left( x\right) -%
\widetilde{f}\left( x,\frac{\pi }{n+1}\right) \right\vert =O_{x}\left(
\left( n+1\right) ^{\beta +\frac{1}{p}+1}A_{n,1}^{\circ }\widetilde{\omega }%
\left( \frac{\pi }{n+1}\right) \right) .
\end{equation*}

\textbf{Theorem C.} \cite[Theorem 9, p. 97]{XK}\ \textit{If }$f\in
L^{p}\left( \widetilde{\omega }\right) _{\beta }$\textit{\ with }$\beta <1-%
\frac{1}{p}$\textit{, where }$\widetilde{\omega }$\textit{\ satisfies the
condition }$\left( \ref{112}\right) ,$ 
\begin{equation}
\left\{ \int_{0}^{\pi /\left( n+1\right) }\left( \frac{\left\vert \psi
_{x}\left( t\right) \right\vert }{\widetilde{\omega }\left( t\right) }%
\right) ^{p}\sin ^{\beta p}\frac{t}{2}dt\right\} ^{1/p}=O_{x}\left( \left(
n+1\right) ^{-1/p}\right)  \label{2.3}
\end{equation}%
\textit{and}%
\begin{equation}
\left\{ \int_{0}^{\pi /\left( n+1\right) }\left( \frac{\widetilde{\omega }%
\left( t\right) }{t\sin ^{\beta }\frac{t}{2}}\right) ^{q}dt\right\}
^{1/q}=O\left( \left( n+1\right) ^{\beta +1/p}\widetilde{\omega }\left( 
\frac{\pi }{n+1}\right) \right) ,  \label{2.4}
\end{equation}%
\textit{with}$\ 0<\gamma <\beta +\frac{1}{p},$\textit{\ where \ }$q=p\left(
p-1\right) ^{-1}.$\textit{\ then\ }%
\begin{equation*}
\left\vert \widetilde{T}_{n,A^{\circ }}^{\text{ }}f\left( x\right) -%
\widetilde{f}\left( x\right) \right\vert =O_{x}\left( \left( n+1\right)
^{\beta +\frac{1}{p}+1}A_{n,1}^{\circ }\widetilde{\omega }\left( \frac{\pi }{%
n+1}\right) \right) .
\end{equation*}

In our theorems we generalize the above results using $A_{n,r}$ with $r\in 
\mathbb{N}
$\ instead of \ $A_{n,1}^{\circ }$.

In the paper $\sum_{k=a}^{b}=0$ when $a>b.$

\section{Statement of the results}

At the begin we will present the estimate of the quantity $\left\vert
T_{n,A}^{\text{ }}f\left( x\right) -f\left( x\right) \right\vert .$ Next, we
will estimate the quantities $\left\vert \widetilde{T}_{n,A}^{\text{ }%
}f\left( x\right) -\widetilde{f}\left( x\right) \right\vert $ and $%
\left\vert \widetilde{T}_{n,A}^{\text{ }}f\left( x\right) -\widetilde{f}%
\left( x,\epsilon \right) \right\vert $. Finally, we will formulate some
remarks and corollaries.

\begin{theorem}
Suppose that $f\in L^{p}$ and a function of the modulus of continuity type $%
\omega $ satisfy , for $r\in 
\mathbb{N}
$ the conditions 
\begin{equation}
\left\{ \int\limits_{0}^{\frac{\pi }{r\left( n+1\right) }}\left( \frac{%
\omega \left( t\right) }{t\sin ^{\beta }\frac{rt}{2}}\right) ^{q}dt\right\}
^{\frac{1}{q}}=O\left( \left( n+1\right) ^{\beta +1/p}\omega \left( \frac{%
\pi }{n+1}\right) \right) ,\text{ \textit{with}}\mathit{\ }q=p\left(
p-1\right) ^{-1},  \label{2.81}
\end{equation}%
\begin{equation}
\left\{ \int\limits_{\frac{2m\pi }{r}}^{\frac{2m\pi }{r}+\frac{\pi }{r\left(
n+1\right) }}\left( \frac{\left\vert \varphi _{x}\left( t\right) \right\vert 
}{\omega \left( t\right) }\right) ^{p}\left\vert \sin \frac{rt}{2}%
\right\vert ^{\beta p}dt\right\} ^{\frac{1}{p}}=O_{x}\left( \left(
n+1\right) ^{-1/p}\right) ,  \label{2.71}
\end{equation}%
\begin{equation}
\left\{ \int\limits_{\frac{2m\pi }{r}+\frac{\pi }{r\left( n+1\right) }}^{%
\frac{2m\pi }{r}+\frac{\pi }{r}}\left( \frac{\left\vert \varphi _{x}\left(
t\right) \right\vert \left\vert \sin \frac{rt}{2}\right\vert ^{\beta }}{%
\omega \left( t\right) \left( t-\frac{2m\pi }{r}\right) ^{\gamma }}\right)
^{p}dt\right\} ^{\frac{1}{p}}=O_{x}\left( \left( n+1\right) ^{\gamma
}\right) ,\text{ with}\ 0<\gamma <\beta +\frac{1}{p},  \label{2.611}
\end{equation}%
where $m\in \left\{ 0,...\left[ \frac{r}{2}\right] \right\} $ when $r$ is an
odd or $m\in \left\{ 0,...\left[ \frac{r}{2}\right] -1\right\} $ when $r$ is
an even natural number. Moreover, let $\omega $ satisfies, for natural $%
r\geq 2$, the conditions%
\begin{equation}
\left\{ \int\limits_{\frac{2\left( m+1\right) \pi }{r}-\frac{\pi }{r\left(
n+1\right) }}^{\frac{2\left( m+1\right) \pi }{r}}\left( \frac{\left\vert
\varphi _{x}\left( t\right) \right\vert }{\omega \left( t\right) }\right)
^{p}\left\vert \sin \frac{rt}{2}\right\vert ^{\beta p}dt\right\} ^{\frac{1}{p%
}}=O_{x}\left( \left( n+1\right) ^{-1/p}\right) ,  \label{2.63}
\end{equation}%
\begin{equation}
\left\{ \int\limits_{\frac{2\left( m+1\right) \pi }{r}-\frac{\pi }{r}}^{%
\frac{2\left( m+1\right) \pi }{r}-\frac{\pi }{r\left( n+1\right) }}\left( 
\frac{\left\vert \varphi _{x}\left( t\right) \right\vert \left\vert \sin 
\frac{rt}{2}\right\vert ^{\beta }}{\omega \left( t\right) \left( \frac{%
2\left( m+1\right) \pi }{r}-t\right) ^{\gamma }}\right) ^{p}dt\right\} ^{%
\frac{1}{p}}=O_{x}\left( \left( n+1\right) ^{\gamma }\right) ,\text{ with}\
0<\gamma <\beta +\frac{1}{p},  \label{2.61}
\end{equation}%
where $m\in \left\{ 0,...\left[ \frac{r}{2}\right] -1\right\} .$ If a matrix 
$A$ is such that 
\begin{equation}
\left[ \sum_{l=0}^{n}\sum_{k=l}^{r+l-1}a_{n,k}\right] ^{-1}=O\left( 1\right)
,  \label{113}
\end{equation}%
is true for $r\in 
\mathbb{N}
$, then%
\begin{equation*}
\left\vert T_{n,A}^{\text{ }}f\left( x\right) -f\left( x\right) \right\vert
=O_{x}\left( \left( n+1\right) ^{\beta +\frac{1}{p}+1}A_{n,r}\omega \left( 
\frac{\pi }{n+1}\right) \right) .
\end{equation*}
\end{theorem}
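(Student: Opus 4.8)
The plan is to reduce the whole deviation to a single kernel integral and then transfer a summation by parts onto the $r$-differences $a_{n,k}-a_{n,k+r}$ that make up $A_{n,r}$. First I would record the classical representation
\[
T_{n,A}f(x)-f(x)=\frac1\pi\int_0^\pi \varphi_x(t)\,\widehat{K}_{n}(t)\,dt,\qquad
\widehat{K}_{n}(t)=\sum_{k=0}^\infty a_{n,k}\,\frac{\sin\left(k+\tfrac12\right)t}{2\sin\frac t2},
\]
which comes from $S_kf(x)-f(x)=\frac1\pi\int_0^\pi\varphi_x(t)D_k(t)\,dt$ together with the normalization $\sum_k a_{n,k}=1$. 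The decisive algebraic step is to write $a_{n,k}=\sum_{l\ge0}\left(a_{n,k+lr}-a_{n,k+(l+1)r}\right)$, which is legitimate because $a_{n,k}\to0$, and to interchange the order of summation. Setting $m=k+lr$ this rearranges the kernel into
\[
\widehat{K}_{n}(t)=\sum_{m=0}^\infty\left(a_{n,m}-a_{n,m+r}\right)\,\overline{D}^{(r)}_{m}(t),\qquad
\overline{D}^{(r)}_{m}(t)=\sum_{\substack{j\ge0\\ m-jr\ge0}}D_{m-jr}(t),
\]
so that the multipliers are exactly the $r$-differences counted by $A_{n,r}$.

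The next step is to estimate the block kernel $\overline D^{(r)}_m$. Summing the sines in arithmetic progression with step $rt$ shows that $\overline D^{(r)}_m(t)$ carries a factor $\left(\sin\frac{rt}{2}\right)^{-1}$ in addition to the usual $\left(\sin\frac t2\right)^{-1}$, which yields the global bound
\[
\left|\widehat{K}_{n}(t)\right|\le \frac{C\,A_{n,r}}{\left|\sin\frac t2\right|\,\left|\sin\frac{rt}{2}\right|}.
\]
This is precisely why the hypotheses are organized around the zeros $t=\frac{2m\pi}{r}$ of $\sin\frac{rt}2$. Alongside it I would keep the crude estimate $\left|\widehat K_n(t)\right|\le \frac{\pi}{2t}$, which follows from $\sum_k a_{n,k}=1$ and $\left|D_k(t)\right|\le\frac1{2\sin\frac t2}$, for use in the thin layers where $\sin\frac{rt}2$ is too small for the refined bound to be of any help.

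I would then split $\int_0^\pi$ according to these singular points: the near-origin piece $\left[0,\frac{\pi}{r(n+1)}\right]$, and, around each $\frac{2m\pi}{r}$, a right boundary layer of width $\frac{\pi}{r(n+1)}$, the right bulk up to $\frac{2m\pi}{r}+\frac{\pi}r$, and symmetrically the left bulk and left boundary layer ending at $\frac{2(m+1)\pi}{r}$. On each piece I would apply Hölder's inequality with exponents $p,q$, feeding the refined kernel bound together with the weights $\left(t-\frac{2m\pi}r\right)^{-\gamma}$ and $\left(\frac{2(m+1)\pi}r-t\right)^{-\gamma}$ into the bulk integrals (conditions (\ref{2.611}), (\ref{2.61})), the crude bound into the boundary layers (conditions (\ref{2.71}), (\ref{2.63})), and condition (\ref{2.81}) to control the oscillation of $\omega$ near the origin; in each case $f\in L^p(\omega)_\beta$ supplies the factor $\omega\left(\frac\pi{n+1}\right)$.

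Finally, the boundary-layer and near-origin contributions come out with order $\left(n+1\right)^{\beta+\frac1p}\omega\left(\frac\pi{n+1}\right)$ but without an $A_{n,r}$ factor, and this is where condition (\ref{113}) enters. Since each block satisfies $\sum_{k=l}^{l+r-1}a_{n,k}\le A_{n,r}$ by telescoping with step $r$, while (\ref{113}) forces $\sum_{l=0}^n\sum_{k=l}^{l+r-1}a_{n,k}\ge c>0$, one obtains $1=O\left((n+1)A_{n,r}\right)$, which upgrades those pieces to the claimed $\left(n+1\right)^{\beta+\frac1p+1}A_{n,r}\omega\left(\frac\pi{n+1}\right)$. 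I expect the main obstacle to be the kernel analysis together with the matching bookkeeping: establishing the uniform bound on $\overline D^{(r)}_m$ near every zero $\frac{2m\pi}r$, and calibrating the decomposition so that the five technical hypotheses feed Hölder's inequality to give exactly the stated order rather than a lossy one.
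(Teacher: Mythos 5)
Your proposal is correct and is essentially the paper's own proof: the same splitting of $[0,\pi ]$ into layers of width $\frac{\pi }{r(n+1)}$ and bulks around the zeros $\frac{2m\pi }{r}$ of $\sin \frac{rt}{2}$, the same H\"{o}lder pairings of (\ref{2.71}), (\ref{2.63}) together with (\ref{2.81}) (transported to each zero, as in the paper's Lemmas 6 and 7) on the layers and of (\ref{2.611}), (\ref{2.61}) together with an explicit power-integral computation on the bulks, and the same conversion $1=O\left( (n+1)A_{n,r}\right) $ extracted from (\ref{113}). The only difference is one of packaging: your telescoping $a_{n,k}=\sum_{l\geq 0}\left( a_{n,k+lr}-a_{n,k+(l+1)r}\right) $ and regrouping of the Dirichlet kernels into arithmetic-progression blocks is a self-contained derivation (modulo justifying the rearrangement by a partial-sum argument with vanishing boundary terms, since absolute convergence of the double sum is not automatic) of exactly the kernel bound $\left\vert \sum_{k}a_{n,k}D_{k,1}^{\circ }\left( t\right) \right\vert \leq A_{n,r}\left\vert \sin \frac{t}{2}\sin \frac{rt}{2}\right\vert ^{-1}$ that the paper states as Lemma 4 and proves by citing a ready-made summation-by-parts identity (its Lemma 3).
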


\begin{theorem}
Let $f\in L^{p},$ $\beta <1-\frac{1}{p}$\textit{\ }and a function of the
modulus of continuity type $\widetilde{\omega }$ satisfy: for $r\in 
\mathbb{N}
$ $,$ the conditions 
\begin{equation}
\left\{ \int\limits_{0}^{\frac{\pi }{r\left( n+1\right) }}\left( \frac{%
t\left\vert \psi _{x}\left( t\right) \right\vert }{\widetilde{\omega }\left(
t\right) }\right) ^{p}\sin ^{\beta p}\frac{t}{2}dt\right\} ^{\frac{1}{p}%
}=O_{x}\left( \left( n+1\right) ^{-1}\right) ,  \label{1115}
\end{equation}%
\textit{\ \ \ \ \ \ \ \ \ \ \ \ \ \ \ \ \ \ \ \ \ \ \ \ \ \ \ \ \ \ \ \ \ \
\ \ \ \ \ \ \ \ \ \ \ \ \ \ \ \ \ \ \ \ \ \ \ \ \ \ \ \ \ \ \ \ \ \ \ \ \ \
\ \ \ \ \ \ \ \ \ \ \ \ \ \ \ \ \ \ \ \ \ \ \ \ \ \ \ \ \ \ \ \ \ \ \ \ \ \
\ \ \ \ \ \ \ \ \ \ \ \ \ \ \ \ \ \ \ \ \ \ \ \ \ \ \ \ \ \ \ \ \ \ \ \ \ \
\ \ \ \ \ \ \ \ \ \ }%
\begin{equation}
\left\{ \int\limits_{\frac{2m\pi }{r}+\frac{\pi }{r\left( n+1\right) }}^{%
\frac{2m\pi }{r}+\frac{\pi }{r}}\left( \frac{\left\vert \psi _{x}\left(
t\right) \right\vert }{\widetilde{\omega }\left( t\right) \left( t-\frac{%
2m\pi }{r}\right) ^{\gamma }}\right) ^{p}\left\vert \sin \frac{rt}{2}%
\right\vert ^{\beta p}dt\right\} ^{\frac{1}{p}}=O_{x}\left( \left(
n+1\right) ^{\gamma }\right) ,\text{ with}\ \text{\ }0<\gamma <\beta +\frac{1%
}{p},  \label{2.6111}
\end{equation}%
and for natural $r\geq 2,$ the conditions 
\begin{equation}
\left\{ \int\limits_{0}^{\frac{\pi }{r\left( n+1\right) }}\left( \frac{%
\widetilde{\omega }\left( t\right) }{t\sin ^{\beta }\frac{rt}{2}}\right)
^{q}dt\right\} ^{\frac{1}{q}}=O\left( \left( n+1\right) ^{\beta +1/p}%
\widetilde{\omega }\left( \frac{\pi }{n+1}\right) \right) ,\text{ \textit{%
with}}\mathit{\ \ }q=p\left( p-1\right) ^{-1},  \label{2.811}
\end{equation}%
\begin{equation}
\left\{ \int\limits_{\frac{2m\pi }{r}}^{\frac{2m\pi }{r}+\frac{\pi }{r\left(
n+1\right) }}\left( \frac{\left\vert \psi _{x}\left( t\right) \right\vert }{%
\widetilde{\omega }\left( t\right) }\right) ^{p}\left\vert \sin \frac{rt}{2}%
\right\vert ^{\beta p}dt\right\} ^{1/p}=O_{x}\left( \left( n+1\right)
^{-1/p}\right) ,  \label{2.711}
\end{equation}%
where $m\in \left\{ 0,...\left[ \frac{r}{2}\right] \right\} $ when $r$ is an
odd or $m\in \left\{ 0,...\left[ \frac{r}{2}\right] -1\right\} $ when $r$ is
an even natural number. Moreover, let $\widetilde{\omega }$ satisfies, for
natural $r\geq 2$, the conditions%
\begin{equation}
\left\{ \int\limits_{\frac{2\left( m+1\right) \pi }{r}-\frac{\pi }{r\left(
n+1\right) }}^{\frac{2\left( m+1\right) \pi }{r}}\left( \frac{\left\vert
\varphi _{x}\left( t\right) \right\vert }{\omega \left( t\right) }\right)
^{p}\left\vert \sin \frac{rt}{2}\right\vert ^{\beta p}dt\right\} ^{\frac{1}{p%
}}=O_{x}\left( \left( n+1\right) ^{-1/p}\right) ,  \label{2.6311}
\end{equation}%
\begin{equation}
\left\{ \int\limits_{\frac{2\left( m+1\right) \pi }{r}-\frac{\pi }{r}}^{%
\frac{2\left( m+1\right) \pi }{r}-\frac{\pi }{r\left( n+1\right) }}\left( 
\frac{\left\vert \psi _{x}\left( t\right) \right\vert \left\vert \sin \frac{%
rt}{2}\right\vert ^{\beta }}{\widetilde{\omega }\left( t\right) \left( \frac{%
2\left( m+1\right) \pi }{r}-t\right) ^{\gamma }}\right) ^{p}dt\right\} ^{%
\frac{1}{p}}=O_{x}\left( \left( n+1\right) ^{\gamma }\right) ,\text{ with}\
0<\gamma <\beta +\frac{1}{p},  \label{2.61111}
\end{equation}%
where $m\in \left\{ 0,...\left[ \frac{r}{2}\right] -1\right\} .$ If a matrix 
$A$ is such that $\left( \ref{113}\right) $ and 
\begin{equation}
\sum_{k=0}^{\infty }\left( k+1\right) ^{2}a_{n,k}=O\left( \left( n+1\right)
^{2}\right)  \label{115}
\end{equation}%
are true for $r\in 
\mathbb{N}
$, then%
\begin{equation*}
\left\vert \widetilde{T}_{n,A}^{\text{ }}f\left( x\right) -\widetilde{f}%
\left( x,\frac{\pi }{n+1}\right) \right\vert =O_{x}\left( \left( n+1\right)
^{\beta +\frac{1}{p}+1}A_{n,r}\widetilde{\omega }\left( \frac{\pi }{n+1}%
\right) \right) .
\end{equation*}
\end{theorem}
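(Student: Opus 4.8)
The plan is to begin from the integral representation of the conjugate transform. Writing the conjugate Dirichlet kernel as $\widetilde{D}_{k}(t)=\frac{\sin\frac{(k+1)t}{2}\sin\frac{kt}{2}}{\sin\frac{t}{2}}=\frac{1}{2}\cot\frac{t}{2}-\frac{\cos\left(\left(k+\frac{1}{2}\right)t\right)}{2\sin\frac{t}{2}}$, one has $\widetilde{S}_{k}f(x)=-\frac{1}{\pi}\int_{0}^{\pi}\psi_{x}(t)\widetilde{D}_{k}(t)\,dt$, and since $\sum_{k}a_{n,k}=1$ the cotangent part can be extracted, giving
\[
\widetilde{T}_{n,A}f(x)-\widetilde{f}\left(x,\tfrac{\pi}{n+1}\right)=-\frac{1}{\pi}\int_{0}^{\frac{\pi}{n+1}}\psi_{x}(t)\frac{1}{2}\cot\frac{t}{2}\,dt+\frac{1}{\pi}\int_{0}^{\pi}\psi_{x}(t)\frac{\sum_{k=0}^{\infty}a_{n,k}\cos\left(\left(k+\frac{1}{2}\right)t\right)}{2\sin\frac{t}{2}}\,dt.
\]
The whole problem then reduces to estimating the kernel $\widetilde{K}_{n}(t):=\frac{1}{2\sin\frac{t}{2}}\sum_{k}a_{n,k}\cos\left(\left(k+\frac12\right)t\right)$ against $\psi_{x}$, together with the small leftover cotangent integral near $0$.

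The decisive step — the one that replaces $A_{n,1}^{\circ}$ by $A_{n,r}$ — is the kernel bound. Writing $\cos\left(\left(k+\frac12\right)t\right)=\mathrm{Re}\,e^{i\left(k+\frac12\right)t}$, I would group the summation index into residue classes modulo $r$ and apply Abel summation in each class to the geometric sums of $w=e^{irt}$, for which $\left|1-w\right|=2\left|\sin\frac{rt}{2}\right|$ and the partial sums are bounded by $\left|\sin\frac{rt}{2}\right|^{-1}$. Collecting the $r$ classes yields
\[
\left|\sum_{k=0}^{\infty}a_{n,k}\cos\left(\left(k+\tfrac12\right)t\right)\right|\leq\frac{1}{\left|\sin\frac{rt}{2}\right|}\sum_{k=0}^{\infty}\left|a_{n,k}-a_{n,k+r}\right|=\frac{A_{n,r}}{\left|\sin\frac{rt}{2}\right|},
\]
so that $\left|\widetilde{K}_{n}(t)\right|\leq A_{n,r}\big/\big(2\sin\frac{t}{2}\left|\sin\frac{rt}{2}\right|\big)$ wherever $\sin\frac{rt}{2}$ is not too small. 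Where this degenerates — near $t=0$ — I would instead use the elementary estimate $\left|\widetilde{D}_{k}(t)\right|\leq C(k+1)^{2}t$, which together with hypothesis $\left(\ref{115}\right)$, $\sum_{k}(k+1)^{2}a_{n,k}=O\left((n+1)^{2}\right)$, gives $\left|\sum_{k}a_{n,k}\widetilde{D}_{k}(t)\right|=O\left((n+1)^{2}t\right)$ on $\left[0,\frac{\pi}{r(n+1)}\right]$. This extra moment bound is needed only in the conjugate case, since the conjugate kernel does not decay at the origin.

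With the two kernel bounds in hand the argument becomes a bookkeeping over the zeros $t_{m}=\frac{2m\pi}{r}$, $m=0,1,\ldots,\left[\frac{r}{2}\right]$, of $\sin\frac{rt}{2}$ in $\left[0,\pi\right]$. I would cut each gap $\left[t_{m},t_{m+1}\right]$ at its midpoint and, on each half, isolate a window of width $\frac{\pi}{r(n+1)}$ adjacent to the zero from the remaining bulk. On the window at the origin I apply H\"older's inequality with the smoothness hypothesis $\left(\ref{1115}\right)$ and the integrability hypothesis $\left(\ref{2.811}\right)$ for $\widetilde{\omega}$, using the $(k+1)^{2}$-kernel bound and $\left(\ref{115}\right)$; on the narrow windows flanking an interior zero I use $\left(\ref{2.711}\right)$ and $\left(\ref{2.6311}\right)$; and on the two bulk pieces, where the $A_{n,r}/\left|\sin\frac{rt}{2}\right|$ bound is in force, I use the $\gamma$-weighted hypotheses $\left(\ref{2.6111}\right)$ and $\left(\ref{2.61111}\right)$, whose weights $\left(t-t_{m}\right)^{-\gamma}$ and $\left(t_{m+1}-t\right)^{-\gamma}$ absorb the blow-up of $\left|\sin\frac{rt}{2}\right|^{-1}$; the restriction $0<\gamma<\beta+\frac{1}{p}$ is what keeps these H\"older integrals convergent. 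Summing the $O\left(\left[\frac{r}{2}\right]\right)$ pieces — a bounded number since $r$ is fixed — and invoking the non-degeneracy condition $\left(\ref{113}\right)$ to compare the window and origin contributions (which arise without an explicit $A_{n,r}$) against the bulk, one collects everything into the single factor $A_{n,r}$ and obtains the claimed order $(n+1)^{\beta+\frac{1}{p}+1}A_{n,r}\widetilde{\omega}\left(\frac{\pi}{n+1}\right)$.

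I expect the interior singularities to be the main obstacle. Unlike the classical case $r=1$ treated in Theorems B and C, where $\sin\frac{rt}{2}=\sin\frac{t}{2}$ vanishes only at the endpoints, here $\sin\frac{rt}{2}$ has $\left[\frac{r}{2}\right]$ interior zeros, and near each one the uniform kernel bound fails; matching the decay of $\psi_{x}$ encoded in the $\gamma$-hypotheses against the local blow-up of $\left|\sin\frac{rt}{2}\right|^{-1}$, and verifying that the thin windows of width $\frac{\pi}{r(n+1)}$ contribute only at the target order, is the delicate part, while condition $\left(\ref{113}\right)$ is precisely what allows the assorted local contributions to be reassembled into the clean factor $A_{n,r}$.
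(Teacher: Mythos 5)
Your proposal is correct and follows essentially the same route as the paper's own proof: your residue-class Abel summation is precisely the paper's Lemma 3/Lemma 5 bound $\left\vert \sum_{k=0}^{\infty }a_{n,k}\widetilde{D^{\circ }}_{k,1}\left( t\right) \right\vert \leq A_{n,r}/\left\vert \sin \frac{t}{2}\sin \frac{rt}{2}\right\vert $, your near-origin estimate via $\left\vert \widetilde{D}_{k,1}\left( t\right) \right\vert \leq \frac{1}{2}k\left( k+1\right) \left\vert t\right\vert $ together with (\ref{115}) and (\ref{1115}) is the paper's treatment of $J_{1}$, and your window/bulk decomposition around the zeros $\frac{2m\pi }{r}$ with the H\"{o}lder pairings (\ref{2.711})/(\ref{2.6311}) on the narrow windows and (\ref{2.6111})/(\ref{2.61111}) on the bulk, followed by condition (\ref{113}) to absorb the terms lacking $A_{n,r}$, reproduces the paper's analysis of $J_{2},J_{3},J_{3}^{\prime },J_{4}$. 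The only differences are cosmetic: you derive the kernel bound inline rather than citing it, and the transfer of (\ref{2.811}) to the integrals of $\widetilde{\omega }$ over the interior windows (the paper's Lemmas 6 and 7) is left implicit in your sketch.
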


\begin{theorem}
Suppose that $f\in L^{p}$ and a function of the modulus of continuity type $%
\widetilde{\omega }$ satisfy , for $r\in 
\mathbb{N}
,$ the conditions $\left( \ref{2.6111}\right) ,$ $\left( \ref{2.811}\right)
, $ $\left( \ref{2.711}\right) ,$ where $m\in \left\{ 0,...\left[ \frac{r}{2}%
\right] \right\} $ when $r$ is an odd or $m\in \left\{ 0,...\left[ \frac{r}{2%
}\right] -1\right\} $ when $r$ is an even natural number. Moreover, let $%
\widetilde{\omega }$ satisfy, for natural $r\geq 2$, the conditions $\left( %
\ref{2.6311}\right) ,$ $\left( \ref{2.61111}\right) $ where $m\in \left\{
0,...\left[ \frac{r}{2}\right] -1\right\} .$ If a matrix $A$ is such that $%
\left( \ref{113}\right) $ is true for $r\in 
\mathbb{N}
$, then%
\begin{equation*}
\left\vert \widetilde{T}_{n,A}^{\text{ }}f\left( x\right) -\widetilde{f}%
\left( x\right) \right\vert =O_{x}\left( \left( n+1\right) ^{\beta +\frac{1}{%
p}+1}A_{n,r}\widetilde{\omega }\left( \frac{\pi }{n+1}\right) \right) .
\end{equation*}
\end{theorem}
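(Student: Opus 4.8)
The plan is to reduce the conjugate deviation to a single kernel integral and then exploit the $r$-step difference structure of the entries. Since $\sum_{k}a_{n,k}=1$ and, for a.e.\ $x$, $\widetilde S_{k}f(x)-\widetilde f(x)=\frac1\pi\int_{0}^{\pi}\psi_{x}(t)\frac{\cos(k+\frac12)t}{2\sin\frac t2}\,dt$, summing against $a_{n,k}$ gives
\[
\widetilde T_{n,A}f(x)-\widetilde f(x)=\frac1\pi\int_{0}^{\pi}\psi_{x}(t)\,\frac{1}{2\sin\frac t2}\sum_{k=0}^{\infty}a_{n,k}\cos\bigl(k+\tfrac12\bigr)t\,dt,
\]
the interchange being legitimate because $a_{n,k}\to0$ as $k\to\infty$ and the series converges absolutely off $t=0$. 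Using $2\sin\frac{rt}{2}\cos(k+\frac12)t=\sin((k+\frac{r+1}{2})t)-\sin((k+\frac{1-r}{2})t)$ and shifting the summation index by $r$, I would rewrite the inner sum as
\[
\sum_{k=0}^{\infty}a_{n,k}\cos\bigl(k+\tfrac12\bigr)t=\frac{1}{2\sin\frac{rt}{2}}\Bigl[\sum_{k=0}^{\infty}(a_{n,k}-a_{n,k+r})\sin\bigl((k+\tfrac{r+1}{2})t\bigr)-\sum_{j=0}^{r-1}a_{n,j}\sin\bigl((j+\tfrac{1-r}{2})t\bigr)\Bigr].
\]
The first sum is the source of the factor $A_{n,r}$ (its modulus is $\le A_{n,r}$), while the second is a fixed boundary sum that vanishes for $r=1$ and must be carried when $r\ge2$.

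Next I would split $[0,\pi]$ at the zeros $t=\frac{2m\pi}{r}$ of $\sin\frac{rt}{2}$ and, inside each block $[\frac{2m\pi}{r},\frac{2(m+1)\pi}{r}]$, into the four subintervals distinguished by the hypotheses: the two ``immediate'' intervals of length $\frac{\pi}{r(n+1)}$ adjacent to the zeros, and the two ``bulk'' intervals. On a bulk interval $|\sin\frac{rt}{2}|\asymp r|t-\frac{2m\pi}{r}|$ is bounded away from $0$, so the kernel is dominated by $\frac{A_{n,r}+\sum_{j<r}a_{n,j}}{4|\sin\frac t2||\sin\frac{rt}{2}|}$, and I would estimate $\int|\psi_x|\,|\mathrm{kernel}|$ by H\"older, pairing the quantity $\frac{|\psi_x|}{\widetilde\omega}|\sin\frac{rt}{2}|^{\beta}(t-\frac{2m\pi}{r})^{-\gamma}$ (bounded in $L^p$ by \eqref{2.6111} or \eqref{2.61111}) against the remaining weight in $L^q$; integrability of this $L^q$ weight requires $\beta q<1$, i.e.\ $\beta<1-\frac1p$, and $0<\gamma<\beta+\frac1p$ forces the resulting integral to concentrate at the lower limit $t\asymp\frac{1}{r(n+1)}$. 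The dominant contribution is the block $m=0$, where both $\sin\frac t2$ and $\sin\frac{rt}{2}$ vanish at the endpoint: on its bulk part $[\frac{\pi}{r(n+1)},\frac{\pi}{r}]$ the kernel is of size $\asymp A_{n,r}t^{-2}$, and a careful evaluation of the $L^q$ weight integral $\int\widetilde\omega(t)^{q}t^{(\gamma-2-\beta)q}\,dt$ — where subadditivity in the form $\widetilde\omega(t)\le(1+t/\delta)\widetilde\omega(\delta)$ lets the endpoint $\delta\asymp\frac1{n+1}$ dominate — produces exactly the target order $(n+1)^{\beta+\frac1p+1}A_{n,r}\widetilde\omega(\frac{\pi}{n+1})$. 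On an immediate interval the singular bound is unavailable, so there I would use the trivial estimate $|\sum a_{n,k}\cos(k+\frac12)t|\le1$ together with the sharper control of $\psi_x$ from \eqref{2.711} and \eqref{2.6311}; in particular the origin interval $[0,\frac{\pi}{r(n+1)}]$ — the piece absent in the truncated Theorem~2 — is handled by \eqref{2.711} with $m=0$ paired with \eqref{2.811}, giving a contribution of order $(n+1)^{\beta}\widetilde\omega(\frac{\pi}{n+1})$.

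It then remains to show that every non-dominant piece is absorbed into the target bound, and here condition \eqref{113} is decisive. Since $\sum_{k\ge l}(a_{n,k}-a_{n,k+r})=\sum_{k=l}^{l+r-1}a_{n,k}\le A_{n,r}$ for every $l$, summing over $l=0,\dots,n$ shows that \eqref{113} forces $A_{n,r}\ge c/(n+1)$. Combined with the lower bound $\widetilde\omega(\delta)\ge c\delta$ coming again from subadditivity, this immediately yields $(n+1)^{\beta}\widetilde\omega(\frac{\pi}{n+1})\le C(n+1)^{\beta+\frac1p+1}A_{n,r}\widetilde\omega(\frac{\pi}{n+1})$ for the origin piece, and likewise dominates the $O((n+1)^{\beta-1})$ contributions from the non-origin immediate intervals; the fixed boundary sum $\sum_{j<r}a_{n,j}\le1$ is absorbed in the same way. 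Summing over the finitely many admissible values of $m$ (the ranges dictated by the parity of $r$) then completes the estimate. The part I expect to be most delicate is precisely this bookkeeping for $r\ge2$: the interior zeros $\frac{2m\pi}{r}$ are approached from both sides, so the kernel — in both its $A_{n,r}$ part and its boundary part — is singular there from the left as well, and it is exactly the interplay of the left-endpoint conditions \eqref{2.6311} and \eqref{2.61111}, the lower bound on $A_{n,r}$ from \eqref{113}, and the subadditivity of $\widetilde\omega$ that must be balanced to keep all of these terms within the claimed order.
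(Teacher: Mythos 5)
Your argument follows essentially the same route as the paper's proof: the same integral representation of $\widetilde{T}_{n,A}f\left( x\right) -\widetilde{f}\left( x\right) $ through the kernel $\widetilde{D^{\circ }}_{k,1}$, the same $r$-difference Abel-type transformation of the kernel sum (your trigonometric identity is a correct, compressed form of what the paper obtains from its Lemmas 3 and 5), the same partition of $\left[ 0,\pi \right] $ at the zeros $\frac{2m\pi }{r}$ of $\sin \frac{rt}{2}$ into short intervals of length $\frac{\pi }{r\left( n+1\right) }$ and bulk intervals, the same H\"{o}lder pairings (on the short intervals $(\ref{2.711})$, $(\ref{2.6311})$ against $(\ref{2.811})$ transported to the other endpoints via the paper's Lemmas 6 and 7; on the bulk intervals $(\ref{2.6111})$, $(\ref{2.61111})$ against an $L^{q}$-weight evaluated directly with Lemma 1), and the same use of $(\ref{113})$ to obtain $\left( n+1\right) A_{n,r}\geq c$ and absorb the $\left( n+1\right) ^{\beta }\widetilde{\omega }\left( \frac{\pi }{n+1}\right) $ remainders. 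The paper merely packages all of this by re-using its Theorem 2 estimates for $J_{4}$ and the $J_{3}$-type sums.

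There is, however, one genuine gap: the handling of the boundary sum $\sum_{j=0}^{r-1}a_{n,j}$. In your bulk-interval kernel bound this sum stands next to $A_{n,r}$ in front of the same singular factor $\frac{1}{4\left\vert \sin \frac{t}{2}\right\vert \left\vert \sin \frac{rt}{2}\right\vert }$, so after H\"{o}lder it produces a contribution of order $\left( \sum_{j=0}^{r-1}a_{n,j}\right) \left( n+1\right) ^{1+\beta +\frac{1}{p}}\widetilde{\omega }\left( \frac{\pi }{n+1}\right) $, and matching the asserted bound requires $\sum_{j=0}^{r-1}a_{n,j}=O\left( A_{n,r}\right) $. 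Your proposed absorption --- bounding this sum by $1$ and invoking $A_{n,r}\geq c/\left( n+1\right) $ from $(\ref{113})$ --- is off by a full factor of $n+1$: it yields only $O_{x}\left( \left( n+1\right) ^{2+\beta +\frac{1}{p}}A_{n,r}\widetilde{\omega }\left( \frac{\pi }{n+1}\right) \right) $, and the loss is real, since e.g. for the Ces\`{a}ro matrix $a_{n,k}=\frac{1}{n+1}$ $\left( k\leq n\right) $ one has $A_{n,r}=\frac{r}{n+1}\rightarrow 0$, so the constant $1$ is not $O\left( A_{n,r}\right) $. The missing step is precisely the second inequality in the paper's Lemma 5, and you already hold the tool that proves it: your own identity $\sum_{k\geq l}\left( a_{n,k}-a_{n,k+r}\right) =\sum_{k=l}^{l+r-1}a_{n,k}$, taken at $l=0$ (the telescoping is legitimate because $\sum_{k}a_{n,k}=1$ forces $a_{n,k}\rightarrow 0$ as $k\rightarrow \infty $), gives $\sum_{j=0}^{r-1}a_{n,j}=\sum_{k=0}^{\infty }\left( a_{n,k}-a_{n,k+r}\right) \leq A_{n,r}$, after which the boundary term merges into the $A_{n,r}$ term at no cost and your proof closes. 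A minor further point: the restriction $\beta <1-\frac{1}{p}$ that you invoke for $L^{q}$-integrability on the bulk intervals is neither needed there (the singular point is excluded from those intervals; what matters is $\gamma <\beta +\frac{1}{p}$, which makes the weight integral concentrate at its lower endpoint) nor assumed in Theorem 3 --- the paper's Corollary emphasizes that Theorem 3 improves Krasniqi's Theorem C exactly without this restriction, which is an assumption of Theorem 2 only, needed for the $J_{1}$-type term that does not occur here.
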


\begin{remark}
If we consider the following more natural conditions%
\begin{equation*}
\left\{ \int\limits_{\frac{2m\pi }{r}+\frac{\pi }{r\left( n+1\right) }}^{%
\frac{2m\pi }{r}+\frac{\pi }{r}}\left( \frac{\left\vert \varphi _{x}\left(
t\right) \right\vert }{\omega \left( t\right) \left( t-\frac{2m\pi }{r}%
\right) ^{\gamma }}\right) ^{p}\left\vert \sin \frac{rt}{2}\right\vert
^{\beta p}dt\right\} ^{\frac{1}{p}}=O_{x}\left( \left( n+1\right) ^{\gamma -%
\frac{1}{p}}\right) ,
\end{equation*}%
\begin{equation*}
\left\{ \int\limits_{\frac{2\left( m+1\right) \pi }{r}-\frac{\pi }{r}}^{%
\frac{2\left( m+1\right) \pi }{r}-\frac{\pi }{r\left( n+1\right) }}\left( 
\frac{\left\vert \varphi _{x}\left( t\right) \right\vert }{\omega \left(
t\right) \left( \frac{2\left( m+1\right) \pi }{r}-t\right) ^{\gamma }}%
\right) ^{p}\left\vert \sin \frac{rt}{2}\right\vert ^{\beta p}dt\right\} ^{%
\frac{1}{p}}=O_{x}\left( \left( n+1\right) ^{\gamma -\frac{1}{p}}\right) ,
\end{equation*}%
for $\gamma \in \left( \frac{1}{p},\frac{1}{p}+\beta \right) $ where $\beta
>0,$ instead of (\ref{2.611})\ and (\ref{2.61}), respectively and for $%
\widetilde{\omega }$ and $\psi $ analogously, then our estimates take the
forms 
\begin{equation*}
\left\vert T_{n,A}^{\text{ }}f\left( x\right) -f\left( x\right) \right\vert
=O_{x}\left( \left( n+1\right) ^{\beta +1}A_{n,r}\omega \left( \frac{\pi }{%
n+1}\right) \right) ,
\end{equation*}%
\begin{equation*}
\left\vert \widetilde{T}_{n,A}^{\text{ }}f\left( x\right) -\widetilde{f}%
\left( x,\frac{\pi }{n+1}\right) \right\vert =O_{x}\left( \left( n+1\right)
^{\beta +1}A_{n,r}\widetilde{\omega }\left( \frac{\pi }{n+1}\right) \right) ,
\end{equation*}%
\begin{equation*}
\left\vert \widetilde{T}_{n,A}^{\text{ }}f\left( x\right) -\widetilde{f}%
\left( x\right) \right\vert =O_{x}\left( \left( n+1\right) ^{\beta +1}A_{n,r}%
\widetilde{\omega }\left( \frac{\pi }{n+1}\right) \right) .
\end{equation*}%
These considerations are natural because in the case of norm approximation
the new conditions, as well as the old ones, always hold with $\left\Vert
\varphi _{.}\left( t\right) \right\Vert _{L^{p}}$ instead of $\left\vert
\varphi _{x}\left( t\right) \right\vert $ and with $\left\Vert \psi
_{.}\left( t\right) \right\Vert _{L^{p}}$ instead of $\left\vert \psi
_{x}\left( t\right) \right\vert ,$ for $f\in L^{p}\left( \widetilde{\omega }%
\right) _{\beta }$\textit{\ and }$f\in L^{p}\left( \widetilde{\omega }%
\right) _{\beta },$\textit{\ respectively.}
\end{remark}

\begin{remark}
We can observe that in the case $r=1$ the conditions $\left( \ref{2.81}%
\right) $- $\left( \ref{2.61}\right) $ in Theorem 1 reduce to $\left( \ref%
{2.6}\right) $-$\left( \ref{2.8}\right) $ and the conditions $\left( \ref%
{1115}\right) $-$\left( \ref{2.61111}\right) $ in Theorem 2 reduce to $%
\left( \ref{112}\right) $-$\left( \ref{111}\right) .$ The similar situation
is in the case of Theorem 3.
\end{remark}

\begin{remark}
We can observe that, for $r=1,$ if we use, in the proof of Theorem 1 the
estimate $\left\vert D_{k,1}^{\circ }\left( t\right) \right\vert \leq k+%
\frac{1}{2}$ from Lemma 2, then we additionally need the condition%
\begin{equation}
\sum_{k=0}^{\infty }\left( k+1\right) a_{n,k}=O\left( n+1\right) .
\label{114}
\end{equation}%
In this case we can apply the weaker conditions%
\begin{equation*}
\left\{ \int\limits_{0}^{\frac{\pi }{n+1}}\left( \frac{t\left\vert \varphi
_{x}\left( t\right) \right\vert }{\omega \left( t\right) }\right)
^{p}\left\vert \sin \frac{t}{2}\right\vert ^{\beta p}dt\right\} ^{\frac{1}{p}%
}=O_{x}\left( \left( n+1\right) ^{-1-1/p}\right)
\end{equation*}%
instead of the condition $\left( \ref{2.71}\right) $.
\end{remark}

\begin{remark}
We note that our extra conditions $\left( \ref{113}\right) ,$ $\left( \ref%
{115}\right) $ and $\left( \ref{114}\right) $ for a lower triangular
infinite matrix\ $A$ always hold.
\end{remark}

\begin{corollary}
Under Remark 2 and the obvious inequality%
\begin{equation}
A_{n,r}\leq A_{n,1}\text{ for }r\in 
\mathbb{N}
\label{51}
\end{equation}%
our results improve and generalize the mentioned Theorems A and C, without
the assumption \textit{\ }$\beta <1-\frac{1}{p},$ and Theorem B \ of Xh. Z.
Krasniqi \cite{XK}.
\end{corollary}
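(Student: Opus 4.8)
The plan is to read off the corollary directly from Theorems 1--3 by specialising them to the hypotheses and matrices of Krasniqi's Theorems A, B, C; essentially no new estimate is required, since the content is purely a comparison of statements.

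First I would establish the \emph{generalisation} half. By Remark 2, setting $r=1$ turns the hypotheses $(\ref{2.81})$--$(\ref{2.61})$ of Theorem 1 into the hypotheses $(\ref{2.6})$--$(\ref{2.8})$ of Theorem A, and likewise the hypotheses of Theorems 2 and 3 into those of Theorems B and C. By Remark 4 the auxiliary matrix requirements $(\ref{113})$, $(\ref{115})$, $(\ref{114})$ are vacuous for any lower triangular matrix, so they hold for Krasniqi's matrix $A^{\circ}$. Thus, with $r=1$ and $A=A^{\circ}$, every hypothesis of Theorem A (resp.\ B, C) already forces the hypotheses of Theorem 1 (resp.\ 2, 3). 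To match the conclusions I note that for a triangular matrix $a_{n,k}=0$ once $k>n$, whence the tail of $A_{n,1}=\sum_{k=0}^{\infty}|a_{n,k}-a_{n,k+1}|$ vanishes and $A_{n,1}=A_{n,1}^{\circ}$. Therefore the conclusion of Theorem 1 (resp.\ 2, 3) reproduces verbatim that of Theorem A (resp.\ B, C), exhibiting Krasniqi's results as the case $r=1$, $A=A^{\circ}$ of ours.

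Next I would treat the \emph{improvement} half. On the one hand, the statements of Theorems 1 and 3 carry no restriction $\beta<1-\tfrac{1}{p}$, whereas Theorems A and C impose it; hence our estimates persist on the strictly larger range of $\beta$, which removes that assumption (note that Theorem 2 retains $\beta<1-\tfrac{1}{p}$, matching Theorem B, so for B only generalisation is claimed). On the other hand, invoking $(\ref{51})$, namely $A_{n,r}\le A_{n,1}$ for every $r\in\mathbb{N}$, together with $A_{n,1}=A_{n,1}^{\circ}$ for triangular matrices, the factor $A_{n,r}$ occurring in our bounds never exceeds the factor $A_{n,1}^{\circ}$ occurring in Krasniqi's. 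Consequently, for the same matrix and modulus, our bound $O_{x}\!\left((n+1)^{\beta+\frac{1}{p}+1}A_{n,r}\,\omega\!\left(\tfrac{\pi}{n+1}\right)\right)$ is dominated by the corresponding Krasniqi bound, which gives the asserted sharpening.

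The obstacle, such as it is, lies entirely in the bookkeeping: one must check that the integration limits and the normalising powers of $(n+1)$ in Remark 2's reduction line up exactly, and that $(\ref{113})$, $(\ref{115})$, $(\ref{114})$ are genuinely automatic for triangular matrices (Remark 4). Both points are already recorded in the preceding remarks, so the corollary follows by assembling Remark 2, Remark 4 and $(\ref{51})$; no fresh analytic input is needed.
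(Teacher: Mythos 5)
Your proposal is correct and follows essentially the same route as the paper, which offers no separate proof of the corollary: it is precisely the assembly of Remark 2 (the $r=1$ reduction of conditions $(\ref{2.81})$--$(\ref{2.61})$ and $(\ref{1115})$--$(\ref{2.61111})$ to Krasniqi's hypotheses), Remark 4 (automatic validity of $(\ref{113})$, $(\ref{115})$, $(\ref{114})$ for lower triangular matrices), and inequality $(\ref{51})$, exactly as you do. Your explicit verification that $A_{n,1}=A_{n,1}^{\circ}$ for triangular matrices, and your note that only Theorems 1 and 3 drop $\beta<1-\frac{1}{p}$ while Theorem 2 retains it (matching Theorem B), are details the paper leaves tacit but fully consistent with its intent.
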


\begin{remark}
We note that instead of $L^{p}\left( \omega \right) _{\beta }$ and $%
L^{p}\left( \widetilde{\omega }\right) _{\beta }$ one can consider an
another subclasses of $L^{p}$ generated by any function of the modulus of
continuity type e. g. $\omega _{x}$ such that%
\begin{equation*}
\omega _{x}(f,\delta )=\sup_{\left\vert t\right\vert \leq \delta }\left\vert
\varphi _{x}\left( t\right) \right\vert \leq \omega _{x}\left( \delta \right)
\end{equation*}%
or 
\begin{equation*}
\omega _{x}(f,\delta )=\frac{1}{\delta }\int_{0}^{\delta }\left\vert \varphi
_{x}\left( t\right) \right\vert dt\leq \omega _{x}\left( \delta \right) ,
\end{equation*}%
and in the conjugate case, too.
\end{remark}

\section{Auxiliary results}

We begin\ this section by some notations from \cite{BSZal2} and \cite[%
Section 5 of Chapter II]{Z}. Let for $r=1,2,...$%
\begin{equation*}
D_{k,r}^{\circ }\left( t\right) =\frac{\sin \frac{\left( 2k+r\right) t}{2}}{%
2\sin \frac{rt}{2}}\text{, }\widetilde{D^{\circ }}_{k,r}\left( t\right) =%
\frac{\cos \frac{\left( 2k+r\right) t}{2}}{2\sin \frac{rt}{2}}\text{ and }%
\widetilde{D}_{k,r}\left( t\right) =\frac{\cos \frac{rt}{2}-\cos \frac{%
\left( 2k+r\right) t}{2}}{2\sin \frac{rt}{2}}.
\end{equation*}

It is clear by \cite{Z} that%
\begin{equation*}
S_{k}f\left( x\right) =\frac{1}{\pi }\int_{-\pi }^{\pi }f\left( x+t\right)
D_{k,1}^{\circ }\left( t\right) dt,
\end{equation*}%
\begin{equation*}
\widetilde{S}_{k}f\left( x\right) =-\frac{1}{\pi }\int_{-\pi }^{\pi }f\left(
x+t\right) \widetilde{D}_{k,1}\left( t\right) dt
\end{equation*}%
and%
\begin{equation*}
T_{n,A}^{\text{ }}f\left( x\right) =\frac{1}{\pi }\int_{-\pi }^{\pi }f\left(
x+t\right) \sum_{k=0}^{n}a_{n,k}D_{k,1}^{\circ }\left( t\right) dt,
\end{equation*}%
\begin{equation*}
\widetilde{T}_{n,A}^{\text{ }}f\left( x\right) =-\frac{1}{\pi }\int_{-\pi
}^{\pi }f\left( x+t\right) \sum_{k=0}^{\infty }a_{n,k}\widetilde{D}%
_{k,1}\left( t\right) dt.
\end{equation*}%
Hence%
\begin{equation*}
T_{n,A}^{\text{ }}f\left( x\right) -f\left( x\right) =\frac{1}{\pi }%
\int_{0}^{\pi }\varphi _{x}\left( t\right)
\sum_{k=0}^{n}a_{n,k}D_{k,1}^{\circ }\left( t\right) dt,
\end{equation*}%
\begin{equation*}
\widetilde{T}_{n,A}^{\text{ }}f\left( x\right) -\widetilde{f}\left( x\right)
=\frac{1}{\pi }\int_{0}^{\pi }\psi _{x}\left( t\right) \sum_{k=0}^{\infty
}a_{n,k}\widetilde{D^{\circ }}_{k,1}\left( t\right) dt
\end{equation*}%
and 
\begin{eqnarray*}
\widetilde{T}_{n,A}^{\text{ }}f\left( x\right) -\widetilde{f}\left( x,\frac{%
\pi }{r\left( n+1\right) }\right) &=&-\frac{1}{\pi }\int_{0}^{\frac{\pi }{%
r\left( n+1\right) }}\psi _{x}\left( t\right) \sum_{k=0}^{\infty }a_{n,k}%
\widetilde{D}_{k,1}\left( t\right) dt \\
&&+\frac{1}{\pi }\int_{\frac{\pi }{r\left( n+1\right) }}^{\pi }\psi
_{x}\left( t\right) \sum_{k=0}^{\infty }a_{n,k}\widetilde{D^{\circ }}%
_{k,1}\left( t\right) dt.
\end{eqnarray*}%
At the begin, we present very useful property of functions of the modulus of
continuity type.

\begin{lemma}
$\cite{Z}$ A function $\omega $ of the modulus of continuity type on the
interval $[0,2\pi ]$ satisfies the following condition \ 
\begin{equation*}
\delta _{2}^{-1}\omega \left( \delta _{2}\right) \leq 2\delta
_{1}^{-1}\omega \left( \delta _{1}\right) \text{ for}\ \delta _{2}\geq
\delta _{1}>0.
\end{equation*}
\end{lemma}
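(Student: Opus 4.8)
The plan is to exploit the subadditivity of $\omega$ to derive the scaling estimate $\omega(\lambda\delta_1)=O(\lambda)\,\omega(\delta_1)$, and then divide through by $\delta_2$. First I would establish, by induction on $n$ using $\omega(\delta_1+\delta_2)\leq\omega(\delta_1)+\omega(\delta_2)$, the integer-multiple bound
\begin{equation*}
\omega(n\delta)\leq n\,\omega(\delta)\quad\text{for every }n\in\mathbb{N}\text{ and }\delta>0.
\end{equation*}
The base case $n=1$ is trivial, and the inductive step writes $n\delta=(n-1)\delta+\delta$ and applies subadditivity together with the inductive hypothesis.

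Next, given $\delta_2\geq\delta_1>0$, I would set $\lambda:=\delta_2/\delta_1\geq 1$ and choose the integer $n:=\lceil\lambda\rceil$, so that $\lambda\leq n\leq\lambda+1$. Since $\omega$ is nondecreasing and $\delta_2=\lambda\delta_1\leq n\delta_1$, combining monotonicity with the integer-multiple bound gives
\begin{equation*}
\omega(\delta_2)\leq\omega(n\delta_1)\leq n\,\omega(\delta_1)\leq(\lambda+1)\,\omega(\delta_1)=\left(\frac{\delta_2}{\delta_1}+1\right)\omega(\delta_1).
\end{equation*}

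Finally I would divide both sides by $\delta_2$ and observe that $\delta_2\geq\delta_1$ forces $\delta_2^{-1}\leq\delta_1^{-1}$, whence
\begin{equation*}
\delta_2^{-1}\omega(\delta_2)\leq\left(\delta_1^{-1}+\delta_2^{-1}\right)\omega(\delta_1)\leq 2\delta_1^{-1}\omega(\delta_1),
\end{equation*}
which is precisely the asserted inequality. There is no genuine obstacle here; the only point requiring care is the passage from integer multiples to the arbitrary real ratio $\lambda$, which is handled by the ceiling together with monotonicity, and the harmless factor $2$ arises exactly from the crude bound $\delta_2^{-1}\leq\delta_1^{-1}$. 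Continuity and $\omega(0)=0$ play no role in the estimate itself and are only needed to keep $\omega$ within the stated class.
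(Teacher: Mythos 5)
Note first that the paper does not actually prove this lemma at all: it is stated with a citation to Zygmund's book and no argument is given, so there is no in-paper proof to compare against. Your subadditivity-plus-monotonicity argument is the standard one, and its skeleton is sound, but as written it contains a domain flaw. The function $\omega$ is only defined on $[0,2\pi]$, and the subadditivity hypothesis is only assumed for arguments with $\delta_1+\delta_2\leq 2\pi$. Consequently your integer-multiple bound $\omega(n\delta)\leq n\,\omega(\delta)$ ``for every $n\in\mathbb{N}$ and $\delta>0$'' only makes sense when $n\delta\leq 2\pi$, and, more seriously, the key step $\omega(\delta_2)\leq\omega(n\delta_1)$ with $n=\lceil\delta_2/\delta_1\rceil$ can evaluate $\omega$ outside its domain: take $\delta_2=2\pi$ and $\delta_1$ slightly larger than $\pi$, so that $n=2$ and $n\delta_1>2\pi$. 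At that point $\omega(n\delta_1)$ is simply undefined, and neither monotonicity nor the induction can be invoked.

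The repair is local: do not round $\delta_2$ up to $n\delta_1$; decompose $\delta_2$ itself. With $n=\lceil\delta_2/\delta_1\rceil$ write $\delta_2=(n-1)\delta_1+\rho$ where $0\leq\rho\leq\delta_1$. Every argument appearing in the iterated subadditivity now lies in $[0,\delta_2]\subseteq[0,2\pi]$, so peeling off one $\delta_1$ at a time is legitimate and gives
\begin{equation*}
\omega\left(\delta_2\right)\leq\left(n-1\right)\omega\left(\delta_1\right)+\omega\left(\rho\right)\leq n\,\omega\left(\delta_1\right),
\end{equation*}
where the last step uses monotonicity, $\omega(\rho)\leq\omega(\delta_1)$. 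Since $n\leq\delta_2/\delta_1+1$, your concluding computation goes through verbatim: dividing by $\delta_2$ and using $\delta_2^{-1}\leq\delta_1^{-1}$ yields $\delta_2^{-1}\omega(\delta_2)\leq\left(\delta_1^{-1}+\delta_2^{-1}\right)\omega(\delta_1)\leq 2\delta_1^{-1}\omega(\delta_1)$. With this one modification your proof is complete and self-contained, which is more than the paper itself provides.
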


Next, we present the known estimates:

\begin{lemma}
$\cite{Z}$ If \ $0<\left\vert t\right\vert \leq \pi ,$ then for $k\in N$ 
\begin{equation*}
\left\vert D_{k,1}^{\circ }\left( t\right) \right\vert \leq \frac{\pi }{%
2\left\vert t\right\vert },\text{ }\left\vert \widetilde{D^{\circ }}%
_{k,1}\left( t\right) \right\vert \leq \frac{\pi }{2\left\vert t\right\vert }%
,\text{ }\left\vert \widetilde{D}_{k,1}\left( t\right) \right\vert \leq 
\frac{\pi }{\left\vert t\right\vert }\text{\ }
\end{equation*}%
and, for any real $t,$ we have%
\begin{equation*}
\left\vert D_{k,1}^{\circ }\left( t\right) \right\vert \leq k+\frac{1}{2},%
\text{ }\left\vert \widetilde{D}_{k,1}\left( t\right) \right\vert \leq \frac{%
1}{2}k\left( k+1\right) \left\vert t\right\vert ,\text{\ }\left\vert 
\widetilde{D}_{k,1}\left( t\right) \right\vert \leq k+1.
\end{equation*}
\end{lemma}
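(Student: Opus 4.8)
The plan is to treat the six inequalities as two groups: the three bounds of order $|t|^{-1}$ that are claimed only on $0<|t|\le\pi$, and the three bounds that are polynomial in $k$ and hold for every real $t$. For both groups I would start from the closed forms recorded just above the lemma, namely $D_{k,1}^{\circ}(t)=\frac{\sin\frac{(2k+1)t}{2}}{2\sin\frac t2}$, $\widetilde{D^{\circ}}_{k,1}(t)=\frac{\cos\frac{(2k+1)t}{2}}{2\sin\frac t2}$ and $\widetilde{D}_{k,1}(t)=\frac{\cos\frac t2-\cos\frac{(2k+1)t}{2}}{2\sin\frac t2}$, so that only elementary trigonometric estimates remain.

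For the first group I would invoke Jordan's inequality in the form $\left|\sin\frac t2\right|\ge\frac{|t|}{\pi}$, valid for $0<|t|\le\pi$. Bounding each numerator by $|\sin|\le1$ and $|\cos|\le1$ gives $\left|\sin\frac{(2k+1)t}{2}\right|\le1$ and $\left|\cos\frac{(2k+1)t}{2}\right|\le1$, whence $\left|D_{k,1}^{\circ}(t)\right|\le\frac1{2\left|\sin\frac t2\right|}\le\frac{\pi}{2|t|}$, and likewise for $\widetilde{D^{\circ}}_{k,1}$. For $\widetilde{D}_{k,1}$ the triangle inequality bounds the numerator by $\left|\cos\frac t2\right|+\left|\cos\frac{(2k+1)t}{2}\right|\le2$, so that $\left|\widetilde{D}_{k,1}(t)\right|\le\frac1{\left|\sin\frac t2\right|}\le\frac{\pi}{|t|}$.

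For the second group I would instead pass to the finite-sum representations $D_{k,1}^{\circ}(t)=\frac12+\sum_{\nu=1}^{k}\cos\nu t$ and $\widetilde{D}_{k,1}(t)=\sum_{\nu=1}^{k}\sin\nu t$, which one recovers from the displayed closed forms by the product-to-sum (telescoping) identities $2\sin\frac t2\cos\nu t=\sin\frac{(2\nu+1)t}{2}-\sin\frac{(2\nu-1)t}{2}$ and $2\sin\frac t2\sin\nu t=\cos\frac{(2\nu-1)t}{2}-\cos\frac{(2\nu+1)t}{2}$, and which are valid for all real $t$. Then $|\cos|\le1$ gives $\left|D_{k,1}^{\circ}(t)\right|\le\frac12+k=k+\frac12$; the bound $|\sin|\le1$ gives $\left|\widetilde{D}_{k,1}(t)\right|\le k\le k+1$; and $|\sin\nu t|\le\nu|t|$ together with $\sum_{\nu=1}^{k}\nu=\frac{k(k+1)}2$ gives $\left|\widetilde{D}_{k,1}(t)\right|\le|t|\sum_{\nu=1}^{k}\nu=\frac12k(k+1)|t|$.

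There is no genuine obstacle here: every inequality reduces to Jordan's inequality, the trivial bounds $|\sin|,|\cos|\le1$ and $|\sin x|\le|x|$, and the Gauss summation $\sum_{\nu=1}^{k}\nu=\frac{k(k+1)}2$. The only points deserving a line of care are the verification that the two finite-sum representations coincide with the closed forms defining the kernels (the short telescoping computation above, equivalently the classical Dirichlet-kernel identity) and the validity of Jordan's inequality up to and including the endpoint $t=\pm\pi$. Since all of these are classical facts from \cite{Z}, the argument would be recorded only for completeness and self-containment.
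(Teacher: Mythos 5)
Your proof is correct and complete: Jordan's inequality $\left|\sin\frac t2\right|\ge\frac{|t|}{\pi}$ on $0<|t|\le\pi$ combined with the trivial bounds on the numerators gives the three $|t|^{-1}$ estimates, and the finite-sum representations $D_{k,1}^{\circ}(t)=\frac12+\sum_{\nu=1}^{k}\cos\nu t$ and $\widetilde{D}_{k,1}(t)=\sum_{\nu=1}^{k}\sin\nu t$ (your telescoping verification is exactly right, and in fact yields the slightly stronger bound $k$ in place of $k+1$) give the three bounds valid for all real $t$. The paper offers no proof of this lemma at all — it is quoted from Zygmund's book — and your argument is precisely the classical one found there, so there is nothing to reconcile.
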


\begin{lemma}
$\cite{BSZal}$ $\cite{BSZal2}$\textit{\ Let }$m,n,r\in N,$\textit{\ }$l\in Z$%
\textit{\ and }$(a_{n})\subset 
\mathbb{C}
$\textit{. If }$t$\textit{\ }$\neq $\textit{\ }$\frac{2l\pi }{r}$\textit{\ ,
then for every }$m\geq n$%
\begin{eqnarray*}
\sum_{k=n}^{m}a_{k}\sin kt &=&-\sum_{k=n}^{m}\left( a_{k}-a_{k+r}\right) 
\widetilde{D^{\circ }}_{k,r}\left( t\right) +\sum_{k=m+1}^{m+r}a_{k}%
\widetilde{D^{\circ }}_{k,-r}\left( t\right) -\sum_{k=n}^{n+r-1}a_{k}%
\widetilde{D^{\circ }}_{k,-r}\left( t\right) , \\
\sum_{k=n}^{m}a_{k}\cos kt &=&\sum_{k=n}^{m}\left( a_{k}-a_{k+r}\right)
D_{k,r}^{\circ }\left( t\right) -\sum_{k=m+1}^{m+r}a_{k}D_{k,-r}^{\circ
}\left( t\right) +\sum_{k=n}^{n+r-1}a_{k}D_{k,-r}^{\circ }\left( t\right) .
\end{eqnarray*}
\end{lemma}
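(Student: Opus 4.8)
The plan is to obtain both identities from a single Abel summation carried out with step $r$ instead of step $1$. The decisive preliminary observation is that each trigonometric term telescopes under an $r$--shift of the first index of the kernel. Applying the product--to--sum formulas $\sin A-\sin B=2\cos\frac{A+B}{2}\sin\frac{A-B}{2}$ and $\cos A-\cos B=-2\sin\frac{A+B}{2}\sin\frac{A-B}{2}$ with $A=\frac{(2k+r)t}{2}$, $B=\frac{(2k-r)t}{2}$, so that $\frac{A+B}{2}=kt$ and $\frac{A-B}{2}=\frac{rt}{2}$, I would first check the two elementary relations
\[
D_{k,r}^{\circ }\left( t\right) -D_{k-r,r}^{\circ }\left( t\right) =\cos kt,\qquad
\widetilde{D^{\circ }}_{k-r,r}\left( t\right) -\widetilde{D^{\circ }}_{k,r}\left( t\right) =\sin kt .
\]
The hypothesis $t\neq\frac{2l\pi}{r}$ is precisely what makes $\sin\frac{rt}{2}\neq0$, so all kernels occurring are well defined. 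Alongside these I would record the reflection identities, immediate from $\sin\frac{-rt}{2}=-\sin\frac{rt}{2}$, namely $D_{k-r,r}^{\circ }=-D_{k,-r}^{\circ }$ and $\widetilde{D^{\circ }}_{k-r,r}=-\widetilde{D^{\circ }}_{k,-r}$; these will convert the shifted kernels that arise naturally into the $-r$ kernels written in the statement.

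Next I would treat the cosine sum. Setting $B_{k}:=D_{k,r}^{\circ }\left( t\right)$, so that $\cos kt=B_{k}-B_{k-r}$, I substitute and split, then reindex the second piece by $k\mapsto k+r$:
\[
\sum_{k=n}^{m}a_{k}\cos kt=\sum_{k=n}^{m}a_{k}B_{k}-\sum_{k=n}^{m}a_{k}B_{k-r}=\sum_{k=n}^{m}a_{k}B_{k}-\sum_{k=n-r}^{m-r}a_{k+r}B_{k}.
\]
The ranges $[n-r,m-r]$, $[m-r+1,m]$ and $[n-r,n-1]$, $[n,m]$ are pairwise adjacent (not overlapping) and partition $[n-r,m]$ in two ways, so for every $m\geq n$ one has $\sum_{k=n-r}^{m-r}=\sum_{k=n}^{m}-\sum_{k=m-r+1}^{m}+\sum_{k=n-r}^{n-1}$ applied to $a_{k+r}B_k$; this produces the main term $\sum_{k=n}^{m}(a_{k}-a_{k+r})B_{k}$ together with $\sum_{k=m-r+1}^{m}a_{k+r}B_{k}$ and $-\sum_{k=n-r}^{n-1}a_{k+r}B_{k}$. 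Shifting $k\mapsto k-r$ in each boundary sum rewrites them over $[m+1,m+r]$ and $[n,n+r-1]$ with kernel $B_{k-r}=D_{k-r,r}^{\circ }(t)$, and the reflection $D_{k-r,r}^{\circ }=-D_{k,-r}^{\circ }$ then delivers exactly the claimed cosine identity.

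For the sine sum the computation is word for word the same with $C_{k}:=\widetilde{D^{\circ }}_{k,r}\left( t\right)$ and $\sin kt=C_{k-r}-C_{k}=-(C_{k}-C_{k-r})$; the global sign coming from this relation together with the reflection $\widetilde{D^{\circ }}_{k-r,r}=-\widetilde{D^{\circ }}_{k,-r}$ reproduces the signs of the three sine terms in the statement. The only genuinely delicate point is the index bookkeeping in the summation by parts: one must identify correctly the top and bottom blocks by which $[n,m]$ and $[n-r,m-r]$ differ and keep the two reflections matched to the right boundary block. There is no analytic obstacle here, the argument being purely algebraic once the two telescoping identities of the first step are established.
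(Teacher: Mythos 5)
Your proof is correct and complete. Note first that the paper itself offers no proof of this lemma: it is imported verbatim from the cited works of Szal (\cite{BSZal}, \cite{BSZal2}), so there is no in-paper argument to diverge from, and your $r$-step Abel summation is, in substance, the standard derivation behind those references. The two telescoping identities are right: with $A=\frac{(2k+r)t}{2}$ and $B=\frac{(2k-r)t}{2}$ one gets $\sin A-\sin B=2\cos kt\,\sin \frac{rt}{2}$ and $\cos B-\cos A=2\sin kt\,\sin \frac{rt}{2}$, hence $D_{k,r}^{\circ }\left( t\right) -D_{k-r,r}^{\circ }\left( t\right) =\cos kt$ and $\widetilde{D^{\circ }}_{k-r,r}\left( t\right) -\widetilde{D^{\circ }}_{k,r}\left( t\right) =\sin kt$, all legitimate because $t\neq \frac{2l\pi }{r}$ guarantees $\sin \frac{rt}{2}\neq 0$. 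Your range decomposition $\sum_{j=n-r}^{m-r}=\sum_{j=n}^{m}-\sum_{j=m-r+1}^{m}+\sum_{j=n-r}^{n-1}$ is an identity of indicator sums valid for every $m\geq n$ and $r\geq 1$ (both sides count the interval $[n-r,m]$ exactly once, since $[n-r,m-r]\cup \lbrack m-r+1,m]$ and $[n-r,n-1]\cup \lbrack n,m]$ are two adjacent-block partitions of it), so the boundary pieces come out as $\sum_{k=m+1}^{m+r}a_{k}B_{k-r}$ and $\sum_{k=n}^{n+r-1}a_{k}B_{k-r}$ with the signs you state, and the reflections $D_{k-r,r}^{\circ }=-D_{k,-r}^{\circ }$, $\widetilde{D^{\circ }}_{k-r,r}=-\widetilde{D^{\circ }}_{k,-r}$ then convert these into the $-r$ kernels appearing in the statement; the extra global sign from $\sin kt=C_{k-r}-C_{k}$ reproduces the sine-side signs correctly. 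One small point you should make explicit: the paper defines $D_{k,r}^{\circ }$ and $\widetilde{D^{\circ }}_{k,r}$ only for $r=1,2,\dots $, so the symbols $D_{k,-r}^{\circ }$ and $\widetilde{D^{\circ }}_{k,-r}$ must be read as the same closed formulas with $-r$ substituted for $r$; with that convention your reflection identities are immediate and both final formulas match the lemma sign for sign.
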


We additionally need two estimates as a consequence of Lemma 3.

\begin{lemma}
\textit{Let }$r\in 
\mathbb{N}
,$\textit{\ }$l\in 
\mathbb{Z}
$\textit{\ and }$(a_{n,k})\subset 
\mathbb{C}
$\textit{. If }$t$\textit{\ }$\neq $\textit{\ }$\frac{2l\pi }{r}$\textit{\ ,
then}%
\begin{equation*}
\left\vert \sum_{k=0}^{\infty }a_{n,k}D_{k,1}^{\circ }\left( t\right)
\right\vert \leq \frac{1}{2\left\vert \sin \frac{t}{2}\sin \frac{rt}{2}%
\right\vert }\left( A_{n,r}+\sum_{k=0}^{r-1}a_{n,k}\right) \leq \frac{1}{%
\left\vert \sin \frac{t}{2}\sin \frac{rt}{2}\right\vert }A_{n,r}.
\end{equation*}
\end{lemma}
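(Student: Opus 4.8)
The plan is to reduce the Dirichlet-type kernel $D_{k,1}^{\circ}$ to the $r$-shifted kernels $D_{k,r}^{\circ}$ and $\widetilde{D^{\circ}}_{k,r}$ appearing in Lemma 3, apply the $r$-step Abel transformation of that lemma, and then bound each resulting kernel crudely by $\frac{1}{2|\sin(rt/2)|}$.

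First I would write $D_{k,1}^{\circ}(t)=\frac{\sin\frac{(2k+1)t}{2}}{2\sin\frac t2}$ and pull the factor $\frac{1}{2\sin\frac t2}$ outside, reducing the problem to estimating $\sum_{k=0}^{\infty}a_{n,k}\sin\frac{(2k+1)t}{2}$. Using $\sin\frac{(2k+1)t}{2}=\cos\frac t2\,\sin kt+\sin\frac t2\,\cos kt$ (equivalently $\operatorname{Im}(e^{it/2}e^{ikt})$), I would split this into a sine sum and a cosine sum. Since $\sum_{k}a_{n,k}=1<\infty$ forces $a_{n,k}\to 0$ as $k\to\infty$ for fixed $n$, Lemma 3 applies to each (taken with lower index $0$, upper index $m$, then letting $m\to\infty$); the upper boundary terms $\sum_{k=m+1}^{m+r}a_{n,k}(\cdots)$ vanish in the limit because $D_{k,-r}^{\circ},\widetilde{D^{\circ}}_{k,-r}$ are bounded by $\frac{1}{2|\sin(rt/2)|}$ while $a_{n,k}\to 0$. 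This leaves
$$\sum_{k=0}^{\infty}a_{n,k}\sin kt=-\sum_{k=0}^{\infty}(a_{n,k}-a_{n,k+r})\widetilde{D^{\circ}}_{k,r}(t)-\sum_{k=0}^{r-1}a_{n,k}\widetilde{D^{\circ}}_{k,-r}(t)$$
together with the analogous cosine identity involving $D_{k,r}^{\circ}$.

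The crux is the recombination: after multiplying the sine identity by $\cos\frac t2$ and the cosine identity by $\sin\frac t2$ and adding, the coefficient of each difference $(a_{n,k}-a_{n,k+r})$ collapses, via $\cos A\cos B-\sin A\sin B=\cos(A+B)$, to $\frac{-\cos\frac{(2k+r+1)t}{2}}{2\sin\frac{rt}{2}}$, and the coefficient of each boundary term $a_{n,k}$ collapses to $\frac{\cos\frac{(2k-r+1)t}{2}}{2\sin\frac{rt}{2}}$. Each has modulus $\le\frac{1}{2|\sin(rt/2)|}$, so taking absolute values, using $a_{n,k}\ge 0$, and recalling $A_{n,r}=\sum_{k=0}^{\infty}|a_{n,k}-a_{n,k+r}|$ gives $\bigl|\sum_{k}a_{n,k}\sin\frac{(2k+1)t}{2}\bigr|\le\frac{1}{2|\sin(rt/2)|}\bigl(A_{n,r}+\sum_{k=0}^{r-1}a_{n,k}\bigr)$. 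Dividing by $2|\sin\frac t2|$ then yields the first displayed inequality (in fact with a factor $\tfrac12$ to spare). For the second inequality I would establish the auxiliary estimate $\sum_{k=0}^{r-1}a_{n,k}\le A_{n,r}$: for each residue $j\in\{0,\dots,r-1\}$, telescoping along the progression $j,j+r,j+2r,\dots$ (valid since $a_{n,j+ir}\to 0$) gives $a_{n,j}=\sum_{i\ge 0}(a_{n,j+ir}-a_{n,j+(i+1)r})$, hence $|a_{n,j}|\le\sum_{i\ge 0}|a_{n,j+ir}-a_{n,j+ir+r}|$, and summing over $j$ reassembles exactly $A_{n,r}$.

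The main obstacle is purely bookkeeping: keeping the $\pm r$ shifts and the boundary index ranges straight when specializing Lemma 3, and checking that the two trigonometric coefficients genuinely fuse into a single cosine. Everything else — the limit $m\to\infty$, the termwise kernel bounds, and the telescoping estimate — is routine.
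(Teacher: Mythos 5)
Your proof is correct and follows essentially the same route as the paper: write $D_{k,1}^{\circ}(t)=\frac{\sin\frac{(2k+1)t}{2}}{2\sin\frac{t}{2}}$, split into $\sin kt$ and $\cos kt$ sums, and apply the $r$-step Abel transformation of Lemma 3 to each. The paper stops at the resulting identity and bounds the four sums separately (implicitly using $\left\vert \cos \frac{t}{2}\right\vert +\left\vert \sin \frac{t}{2}\right\vert \leq \sqrt{2}$ together with $\left\vert D_{k,\pm r}^{\circ }\right\vert ,\left\vert \widetilde{D^{\circ }}_{k,\pm r}\right\vert \leq \frac{1}{2\left\vert \sin \frac{rt}{2}\right\vert }$), whereas your fusion of the two kernels into a single cosine and your telescoping proof of $\sum_{k=0}^{r-1}a_{n,k}\leq A_{n,r}$ merely make explicit, and slightly sharpen, steps the paper leaves to the reader.
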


\begin{proof}
By Lemma 3,%
\begin{eqnarray*}
&&\sum_{k=0}^{\infty }a_{n,k}D_{k,1}^{\circ }\left( t\right)
=\sum_{k=0}^{\infty }a_{n,k}\frac{\sin \frac{\left( 2k+1\right) t}{2}}{2\sin 
\frac{t}{2}} \\
&=&\frac{1}{2\sin \frac{t}{2}}\left( \sum_{k=0}^{\infty }a_{n,k}\sin kt\cos 
\frac{t}{2}+\sum_{k=0}^{\infty }a_{n,k}\cos kt\sin \frac{t}{2}\right) \\
&=&\frac{\cos \frac{t}{2}}{2\sin \frac{t}{2}}\left( -\sum_{k=0}^{\infty
}\left( a_{n,k}-a_{n,k+r}\right) \text{ }\widetilde{D^{\circ }}_{k,r}\left(
t\right) -\sum_{k=0}^{r-1}a_{n,k}\widetilde{D^{\circ }}_{k,-r}\left(
t\right) \right) \\
&&+\frac{1}{2}\left( \sum_{k=0}^{\infty }\left( a_{n,k}-a_{n,k+r}\right) 
\text{ }D_{k,r}^{\circ }\left( t\right)
+\sum_{k=0}^{r-1}a_{n,k}D_{k,-r}^{\circ }\left( t\right) \right)
\end{eqnarray*}%
and our lemma is proved.
\end{proof}

\begin{lemma}
\textit{Let }$r\in 
\mathbb{N}
,$\textit{\ }$l\in 
\mathbb{Z}
$\textit{\ and }$a:=(a_{n})\subset 
\mathbb{C}
$\textit{. If }$t$\textit{\ }$\neq $\textit{\ }$\frac{2l\pi }{r}$\textit{\ ,
then}%
\begin{equation*}
\left\vert \sum_{k=0}^{\infty }a_{n,k}\widetilde{D^{\circ }}_{k,1}\left(
t\right) \right\vert \leq \frac{1}{2\left\vert \sin \frac{t}{2}\sin \frac{rt%
}{2}\right\vert }\left( A_{n,r}+\sum_{k=0}^{r-1}a_{n,k}\right) \leq \frac{1}{%
\left\vert \sin \frac{t}{2}\sin \frac{rt}{2}\right\vert }A_{n,r}.
\end{equation*}
\end{lemma}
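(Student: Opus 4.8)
The plan is to transcribe the proof of Lemma 4 almost verbatim, replacing the cosine-type kernel $D_{k,1}^{\circ}$ by the conjugate kernel $\widetilde{D^{\circ}}_{k,1}$. First I would start from the defining identity $\widetilde{D^{\circ}}_{k,1}(t)=\frac{\cos\frac{(2k+1)t}{2}}{2\sin\frac{t}{2}}$ (the hypothesis $t\neq\frac{2l\pi}{r}$ guarantees both $\sin\frac{t}{2}\neq 0$ and $\sin\frac{rt}{2}\neq 0$) and expand the numerator by the angle-addition formula $\cos\frac{(2k+1)t}{2}=\cos kt\,\cos\frac{t}{2}-\sin kt\,\sin\frac{t}{2}$. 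This separates the series into a cosine part and a sine part:
\begin{equation*}
\sum_{k=0}^{\infty}a_{n,k}\widetilde{D^{\circ}}_{k,1}\left( t\right) =\frac{\cos \frac{t}{2}}{2\sin \frac{t}{2}}\sum_{k=0}^{\infty }a_{n,k}\cos kt-\frac{1}{2}\sum_{k=0}^{\infty }a_{n,k}\sin kt,
\end{equation*}
which is the exact analogue of the decomposition used for $D_{k,1}^{\circ}$ in Lemma 4, with the roles of the sine and cosine sums interchanged and a sign change.

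Next I would apply Lemma 3 with $n=0$ and $m\to\infty$ to both inner sums. Because $\sum_{k}a_{n,k}=1$ forces $a_{n,k}\to 0$ as $k\to\infty$ for each fixed $n$, while for $t\neq\frac{2l\pi}{r}$ the kernels $D_{k,-r}^{\circ}$ and $\widetilde{D^{\circ}}_{k,-r}$ are bounded uniformly in $k$ by $\frac{1}{2\left\vert\sin\frac{rt}{2}\right\vert}$, the trailing blocks $\sum_{k=m+1}^{m+r}$ vanish in the limit. This produces precisely the two expressions already obtained inside the proof of Lemma 4, namely the representation of $\sum a_{n,k}\cos kt$ through $(a_{n,k}-a_{n,k+r})D_{k,r}^{\circ}(t)$ plus a boundary block $\sum_{k=0}^{r-1}a_{n,k}D_{k,-r}^{\circ}(t)$, and analogously for $\sum a_{n,k}\sin kt$ through $\widetilde{D^{\circ}}_{k,r}$ and $\widetilde{D^{\circ}}_{k,-r}$.

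To finish I would substitute these into the decomposition and estimate termwise, using $\left\vert D_{k,\pm r}^{\circ}(t)\right\vert\leq\frac{1}{2\left\vert\sin\frac{rt}{2}\right\vert}$ and $\left\vert\widetilde{D^{\circ}}_{k,\pm r}(t)\right\vert\leq\frac{1}{2\left\vert\sin\frac{rt}{2}\right\vert}$ together with $A_{n,r}=\sum_{k=0}^{\infty}\left\vert a_{n,k}-a_{n,k+r}\right\vert$. Each block is then bounded by $\frac{1}{2\left\vert\sin\frac{rt}{2}\right\vert}\left(A_{n,r}+\sum_{k=0}^{r-1}a_{n,k}\right)$, and collecting the two prefactors $\frac{\left\vert\cos\frac{t}{2}\right\vert}{2\left\vert\sin\frac{t}{2}\right\vert}$ and $\frac{1}{2}$ gives the factor $\frac{\left\vert\cos\frac{t}{2}\right\vert+\left\vert\sin\frac{t}{2}\right\vert}{2}\cdot\frac{1}{2\left\vert\sin\frac{t}{2}\sin\frac{rt}{2}\right\vert}$. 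The first claimed inequality follows from $\left\vert\cos\frac{t}{2}\right\vert+\left\vert\sin\frac{t}{2}\right\vert\leq 2$. For the second inequality I would invoke the telescoping bound $\sum_{k=0}^{r-1}a_{n,k}\leq A_{n,r}$: splitting $A_{n,r}$ into residue classes $k=j+sr$ gives, for each fixed $j\in\{0,\dots,r-1\}$, the estimate $\sum_{s=0}^{\infty}\left\vert a_{n,j+sr}-a_{n,j+(s+1)r}\right\vert\geq a_{n,j}$ (since $a_{n,k}\geq 0$ and $a_{n,k}\to 0$), and summing over $j$ yields $A_{n,r}\geq\sum_{k=0}^{r-1}a_{n,k}$.

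Since the argument is a direct transcription of Lemma 4, I do not expect any genuine obstacle. The only points requiring care are keeping the correct sign in the angle-addition identity for the conjugate kernel and justifying the two auxiliary facts — the vanishing of the $m\to\infty$ boundary blocks and the telescoping inequality $\sum_{k=0}^{r-1}a_{n,k}\leq A_{n,r}$ — both of which are elementary.
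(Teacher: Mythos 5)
Your proposal is correct and follows essentially the same route as the paper: expand $\widetilde{D^{\circ}}_{k,1}$ by the angle-addition formula, apply Lemma 3 (with $n=0$, $m\to\infty$) to the resulting cosine and sine sums, and bound the kernels $D_{k,\pm r}^{\circ}$, $\widetilde{D^{\circ}}_{k,\pm r}$ by $\frac{1}{2\left\vert \sin \frac{rt}{2}\right\vert }$. The only difference is that you spell out the steps the paper leaves implicit — the vanishing of the $m\to\infty$ boundary blocks, the collection of the prefactors via $\left\vert \cos \frac{t}{2}\right\vert +\left\vert \sin \frac{t}{2}\right\vert \leq 2$, and the telescoping inequality $\sum_{k=0}^{r-1}a_{n,k}\leq A_{n,r}$ — all of which are correct.
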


\begin{proof}
By Lemma 3,%
\begin{eqnarray*}
&&\sum_{k=0}^{\infty }a_{n,k}\widetilde{D^{\circ }}_{k,1}\left( t\right)
=\sum_{k=0}^{\infty }a_{n,k}\frac{\cos \frac{\left( 2k+1\right) t}{2}}{2\sin 
\frac{t}{2}} \\
&=&\frac{1}{2\sin \frac{t}{2}}\left( \sum_{k=0}^{\infty }a_{n,k}\cos kt\cos 
\frac{t}{2}-\sum_{k=0}^{\infty }a_{n,k}\sin kt\sin \frac{t}{2}\right)
\end{eqnarray*}%
\begin{eqnarray*}
&=&\frac{\cos \frac{t}{2}}{2\sin \frac{t}{2}}\left( \sum_{k=0}^{\infty
}\left( a_{n,k}-a_{n,k+r}\right) D_{k,r}^{\circ }\left( t\right)
+\sum_{k=0}^{r-1}a_{n,k}D_{k,-r}^{\circ }\left( t\right) \right) \\
&&-\frac{1}{2}\left( -\sum_{k=0}^{\infty }\left( a_{n,k}-a_{n,k+r}\right) 
\widetilde{D^{\circ }}_{k,r}\left( t\right) -\sum_{k=0}^{r-1}a_{n,k}%
\widetilde{D^{\circ }}_{k,-r}\left( t\right) \right)
\end{eqnarray*}%
and thus our proof is complete.
\end{proof}

We also need some special conditions which follow from mentioned early.

\begin{lemma}
If condition $\left( \ref{2.81}\right) $ holds with$\ q=p\left( p-1\right)
^{-1}$ and a natural $r\geq 2,$ and with any function $\omega $ of the
modulus of continuity type, then%
\begin{equation*}
\left\{ \int_{\frac{2\left( m+1\right) \pi }{r}-\frac{\pi }{r\left(
n+1\right) }}^{\frac{2\left( m+1\right) \pi }{r}}\left( \frac{\omega \left(
t\right) }{t\left\vert \sin \frac{rt}{2}\right\vert ^{\beta }}\right)
^{q}dt\right\} ^{1/q}=O_{x}\left( \left( n+1\right) ^{\beta +1/p}\omega
\left( \frac{\pi }{n+1}\right) \right) ,
\end{equation*}%
where $m\in \left\{ 0,...\left[ \frac{r}{2}\right] -1\right\} .$
\end{lemma}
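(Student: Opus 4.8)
The plan is to reduce the integral over $\left[\frac{2(m+1)\pi}{r}-\frac{\pi}{r(n+1)},\frac{2(m+1)\pi}{r}\right]$, which abuts the zero $t=\frac{2(m+1)\pi}{r}$ of $\sin\frac{rt}{2}$, to the integral over $\left[0,\frac{\pi}{r(n+1)}\right]$ appearing in hypothesis $(\ref{2.81})$, which abuts the zero $t=0$. First I would perform the change of variable $u=\frac{2(m+1)\pi}{r}-t$, so that $t$ runs over the original interval exactly when $u$ runs over $\left[0,\frac{\pi}{r(n+1)}\right]$ and $dt=-du$. Under this substitution $\frac{rt}{2}=(m+1)\pi-\frac{ru}{2}$, and since $\sin\!\left((m+1)\pi-\frac{ru}{2}\right)=(-1)^m\sin\frac{ru}{2}$, one gets $\left\vert\sin\frac{rt}{2}\right\vert=\sin\frac{ru}{2}$ for $u\in\left(0,\frac{\pi}{r(n+1)}\right]$, where the right-hand side is nonnegative. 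Thus the integral becomes $\int_0^{\frac{\pi}{r(n+1)}}\bigl(\frac{\omega(\frac{2(m+1)\pi}{r}-u)}{(\frac{2(m+1)\pi}{r}-u)\,\sin^\beta\frac{ru}{2}}\bigr)^q\,du$.

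Next I would compare the transformed integrand with the integrand of $(\ref{2.81})$. The denominators $\sin^{\beta}\frac{ru}{2}$ coincide exactly, so it suffices to dominate the ratio $\frac{\omega(\frac{2(m+1)\pi}{r}-u)}{\frac{2(m+1)\pi}{r}-u}$ by a constant multiple of $\frac{\omega(u)}{u}$. For this I would invoke Lemma 1 with $\delta_1=u$ and $\delta_2=\frac{2(m+1)\pi}{r}-u$: the required inequality $\delta_2\geq\delta_1>0$ holds because $u\leq\frac{\pi}{r(n+1)}\leq\frac{\pi}{r}\leq\frac{(m+1)\pi}{r}$ forces $\frac{2(m+1)\pi}{r}-u\geq u$, while $m\leq\left[\frac{r}{2}\right]-1$ guarantees $\frac{2(m+1)\pi}{r}\leq\pi$, so $\frac{2(m+1)\pi}{r}-u>0$ and every quantity is well defined. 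Lemma 1 then yields $\frac{\omega(\frac{2(m+1)\pi}{r}-u)}{\frac{2(m+1)\pi}{r}-u}\leq 2\,\frac{\omega(u)}{u}$.

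Combining the two steps gives the pointwise bound $\bigl(\frac{\omega(\frac{2(m+1)\pi}{r}-u)}{(\frac{2(m+1)\pi}{r}-u)\sin^\beta\frac{ru}{2}}\bigr)^q\leq 2^q\bigl(\frac{\omega(u)}{u\sin^\beta\frac{ru}{2}}\bigr)^q$ on $\left(0,\frac{\pi}{r(n+1)}\right]$. Integrating over $u$ and extracting the $1/q$ power, the left-hand quantity of the lemma is at most $2\{\int_0^{\pi/(r(n+1))}(\omega(u)/(u\sin^\beta\frac{ru}{2}))^q\,du\}^{1/q}$, which by hypothesis $(\ref{2.81})$ is $O\!\left((n+1)^{\beta+1/p}\omega\!\left(\frac{\pi}{n+1}\right)\right)$; since this bound is uniform it in particular gives the asserted $O_x$ estimate. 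I do not expect a genuine obstacle here: the whole argument is a reflection plus an application of Lemma 1. The only points demanding care are the verification that the monotonicity hypothesis $\delta_2\geq\delta_1$ of Lemma 1 holds throughout the range of $u$, and the sign bookkeeping in the reflection identity $\left\vert\sin\frac{rt}{2}\right\vert=\sin\frac{ru}{2}$, both of which are settled by the elementary estimates above.
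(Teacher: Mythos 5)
Your proposal is correct and follows essentially the same route as the paper's own proof: the reflection substitution $u=\frac{2(m+1)\pi}{r}-t$, the identity $\left\vert \sin \frac{rt}{2}\right\vert =\left\vert \sin \frac{ru}{2}\right\vert$, the comparison $\frac{\omega \left( \frac{2(m+1)\pi }{r}-u\right) }{\frac{2(m+1)\pi }{r}-u}\leq 2\,\frac{\omega \left( u\right) }{u}$ via Lemma 1, and then condition (\ref{2.81}). The only difference is that you spell out the sign bookkeeping and the verification of $\delta _{2}\geq \delta _{1}$, which the paper leaves implicit.
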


\begin{proof}
By substitution $t=\frac{2\left( m+1\right) \pi }{r}-u$ we obtain%
\begin{eqnarray*}
&&\left\{ \int_{\frac{2\left( m+1\right) \pi }{r}-\frac{\pi }{r\left(
n+1\right) }}^{\frac{2\left( m+1\right) \pi }{r}}\left( \frac{\omega \left(
t\right) }{t\left\vert \sin \frac{rt}{2}\right\vert ^{\beta }}\right)
^{q}dt\right\} ^{1/q} \\
&=&\left\{ \int_{0}^{\frac{\pi }{r\left( n+1\right) }}\left( \frac{\omega
\left( \frac{2\left( m+1\right) \pi }{r}-u\right) }{\left( \frac{2\left(
m+1\right) \pi }{r}-u\right) \left\vert \sin \frac{r}{2}\left( \frac{2\left(
m+1\right) \pi }{r}-u\right) \right\vert ^{\beta }}\right) ^{q}du\right\}
^{1/q} \\
&=&\left\{ \int_{0}^{\frac{\pi }{r\left( n+1\right) }}\left( \frac{\omega
\left( \frac{2\left( m+1\right) \pi }{r}-u\right) }{\left( \frac{2\left(
m+1\right) \pi }{r}-u\right) \left\vert \sin \frac{ru}{2}\right\vert ^{\beta
}}\right) ^{q}du\right\} ^{1/q}.
\end{eqnarray*}%
Since $\frac{2\left( m+1\right) \pi }{r}-u\geq u,$ by Lemma 1,%
\begin{equation*}
\left\{ \int_{0}^{\frac{\pi }{r\left( n+1\right) }}\left( \frac{\omega
\left( \frac{2\left( m+1\right) \pi }{r}-u\right) }{\left( \frac{2\left(
m+1\right) \pi }{r}-u\right) \left\vert \sin \frac{ru}{2}\right\vert ^{\beta
}}\right) ^{q}du\right\} ^{1/q}\leq \left\{ \int_{0}^{\frac{\pi }{r\left(
n+1\right) }}\left( 2\frac{\omega \left( u\right) }{u\left\vert \sin \frac{ru%
}{2}\right\vert ^{\beta }}\right) ^{q}du\right\} ^{1/q}.
\end{equation*}%
Hence, by $\left( \ref{2.81}\right) $ our estimate follows.
\end{proof}

\begin{lemma}
If condition $\left( \ref{2.81}\right) $ holds with$\ q=p\left( p-1\right)
^{-1}$ and a natural $r,$ and with any function $\omega $ of the modulus of
continuity type, then%
\begin{equation*}
\left\{ \int_{\frac{2m\pi }{r}}^{\frac{2m\pi }{r}+\frac{\pi }{r\left(
n+1\right) }}\left( \frac{\omega \left( t\right) }{t\left\vert \sin \frac{rt%
}{2}\right\vert ^{\beta }}\right) ^{q}dt\right\} ^{1/q}=O_{x}\left( \left(
n+1\right) ^{\beta +1/p}\omega \left( \frac{\pi }{n+1}\right) \right) ,
\end{equation*}%
where $m\in \left\{ 0,...\left[ \frac{r}{2}\right] \right\} $.
\end{lemma}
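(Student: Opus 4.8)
The plan is to reduce this claim to condition $\left( \ref{2.81}\right) $ by the shift substitution $t=\frac{2m\pi }{r}+u$, in complete analogy with the proof of the preceding lemma, where the reflection $t=\frac{2\left( m+1\right) \pi }{r}-u$ was used. Indeed, when $m=0$ the interval of integration is already $\left( 0,\frac{\pi }{r\left( n+1\right) }\right) $ and $t=u$, so the asserted bound is literally $\left( \ref{2.81}\right) $; it therefore suffices to treat $m\geq 1$ by transporting the integral back to a neighbourhood of the origin, where the hypothesis applies.

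First I would carry out the substitution $t=\frac{2m\pi }{r}+u$, so that $dt=du$ and the limits $t=\frac{2m\pi }{r}$, $t=\frac{2m\pi }{r}+\frac{\pi }{r\left( n+1\right) }$ become $u=0$, $u=\frac{\pi }{r\left( n+1\right) }$. The key simplification is that the sine factor is invariant under this shift up to sign: since $\frac{rt}{2}=m\pi +\frac{ru}{2}$, the parity identity $\sin \left( m\pi +\frac{ru}{2}\right) =\left( -1\right) ^{m}\sin \frac{ru}{2}$ gives $\left\vert \sin \frac{rt}{2}\right\vert =\left\vert \sin \frac{ru}{2}\right\vert $, exactly as in the previous lemma. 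This rewrites the integral as $\left\{ \int_{0}^{\pi /\left( r\left( n+1\right) \right) }\left( \frac{\omega \left( \frac{2m\pi }{r}+u\right) }{\left( \frac{2m\pi }{r}+u\right) \left\vert \sin \frac{ru}{2}\right\vert ^{\beta }}\right) ^{q}du\right\} ^{1/q}$.

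Next I would replace the shifted modulus-of-continuity factor by the one evaluated at $u$. Because $\frac{2m\pi }{r}+u\geq u>0$ for $u$ in the open interval, Lemma 1 (applied with $\delta _{2}=\frac{2m\pi }{r}+u$ and $\delta _{1}=u$) yields $\frac{\omega \left( \frac{2m\pi }{r}+u\right) }{\frac{2m\pi }{r}+u}\leq 2\frac{\omega \left( u\right) }{u}$, so the integrand is dominated by $\left( 2\frac{\omega \left( u\right) }{u\left\vert \sin \frac{ru}{2}\right\vert ^{\beta }}\right) ^{q}$. Pulling the constant out of the $L^{q}$-norm and invoking $\left( \ref{2.81}\right) $, noting that $\sin \frac{ru}{2}>0$ on $\left( 0,\frac{\pi }{r\left( n+1\right) }\right) $ so that $\left\vert \sin \frac{ru}{2}\right\vert ^{\beta }=\sin ^{\beta }\frac{ru}{2}$, produces the desired bound $O_{x}\left( \left( n+1\right) ^{\beta +1/p}\omega \left( \frac{\pi }{n+1}\right) \right) $.

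The argument is almost mechanical; the only points requiring care are the direction of the monotonicity inequality in Lemma 1 and the sign bookkeeping for the sine. Here both are favourable: since the shift $t=\frac{2m\pi }{r}+u$ only increases the argument of $\omega $, Lemma 1 applies in precisely the needed direction with no case analysis, making this estimate even more direct than the reflected one of the previous lemma. I therefore expect no genuine obstacle, only the verification that $\left\vert \sin \frac{ru}{2}\right\vert ^{\beta }$ coincides with the $\sin ^{\beta }\frac{ru}{2}$ appearing in $\left( \ref{2.81}\right) $ on the relevant range, which is immediate.
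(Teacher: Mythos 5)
Your proposal is correct and follows essentially the same route as the paper's own proof: the shift $t=\frac{2m\pi }{r}+u$, the periodicity identity $\left\vert \sin \frac{r}{2}\left( \frac{2m\pi }{r}+u\right) \right\vert =\left\vert \sin \frac{ru}{2}\right\vert $, the monotonicity inequality of Lemma 1 applied with $\delta _{2}=\frac{2m\pi }{r}+u\geq u=\delta _{1}$, and finally condition $\left( \ref{2.81}\right) $. Your extra remarks (the trivial case $m=0$ and the observation that $\left\vert \sin \frac{ru}{2}\right\vert ^{\beta }=\sin ^{\beta }\frac{ru}{2}$ on the relevant interval) are harmless refinements of details the paper leaves implicit.
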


\begin{proof}
By substitution $t=\frac{2m\pi }{r}+u,$\ analogously to the above proof, we
obtain%
\begin{eqnarray*}
&&\left\{ \int_{\frac{2m\pi }{r}}^{\frac{2m\pi }{r}+\frac{\pi }{r\left(
n+1\right) }}\left( \frac{\omega \left( t\right) }{t\left\vert \sin \frac{rt%
}{2}\right\vert ^{\beta }}\right) ^{q}dt\right\} ^{1/q} \\
&=&\left\{ \int_{0}^{\frac{\pi }{r\left( n+1\right) }}\left( \frac{\omega
\left( \frac{2m\pi }{r}+u\right) }{\left( \frac{2m\pi }{r}+u\right)
\left\vert \sin \frac{r}{2}\left( \frac{2m\pi }{r}+u\right) \right\vert
^{\beta }}\right) ^{q}dt\right\} ^{1/q} \\
&=&\left\{ \int_{0}^{\frac{\pi }{r\left( n+1\right) }}\left( \frac{\omega
\left( \frac{2m\pi }{r}+u\right) }{\left( \frac{2m\pi }{r}+u\right)
\left\vert \sin \frac{ru}{2}\right\vert ^{\beta }}\right) ^{q}dt\right\}
^{1/q} \\
&\leq &\left\{ \int_{0}^{\frac{\pi }{r\left( n+1\right) }}\left( 2\frac{%
\omega \left( u\right) }{u\left\vert \sin \frac{ru}{2}\right\vert ^{\beta }}%
\right) ^{q}dt\right\} ^{1/q}=O_{x}\left( \left( n+1\right) ^{\beta
+1/p}\omega \left( \frac{\pi }{n+1}\right) \right)
\end{eqnarray*}%
and we have the desired estimate.
\end{proof}

\section{Proofs of the results}

\subsection{Proof of Theorem 1}

\bigskip It is clear that for an odd $r$%
\begin{eqnarray*}
T_{n,A}^{\text{ }}f\left( x\right) -f\left( x\right) &=&\frac{1}{\pi }%
\sum_{m=0}^{\left[ r/2\right] }\int_{\frac{2m\pi }{r}}^{\frac{2m\pi }{r}+%
\frac{\pi }{r}}\varphi _{x}\left( t\right) \sum_{k=0}^{\infty
}a_{n,k}D_{k,1}^{\circ }\left( t\right) dt \\
&&+\frac{1}{\pi }\sum_{m=0}^{\left[ r/2\right] -1}\int_{\frac{2m\pi }{r}+%
\frac{\pi }{r}}^{\frac{2\left( m+1\right) \pi }{r}}\varphi _{x}\left(
t\right) \sum_{k=0}^{\infty }a_{n,k}D_{k,1}^{\circ }\left( t\right) dt \\
&=&I_{1}\left( x\right) +I_{2}\left( x\right)
\end{eqnarray*}%
and for an even $r$%
\begin{eqnarray*}
T_{n,A}^{\text{ }}f\left( x\right) -f\left( x\right) &=&\frac{1}{\pi }%
\sum_{m=0}^{\left[ r/2\right] -1}\left( \int_{\frac{2m\pi }{r}}^{\frac{2m\pi 
}{r}+\frac{\pi }{r}}+\int_{\frac{2m\pi }{r}+\frac{\pi }{r}}^{\frac{2\left(
m+1\right) \pi }{r}}\right) \varphi _{x}\left( t\right) \sum_{k=0}^{\infty
}a_{n,k}D_{k,1}^{\circ }\left( t\right) dt \\
&=&I_{1}^{\prime }\left( x\right) +I_{2}\left( x\right)
\end{eqnarray*}%
Then,%
\begin{equation*}
\left\vert T_{n,A}^{\text{ }}f\left( x\right) -f\left( x\right) \right\vert
\leq \left\vert I_{1}\left( x\right) \right\vert +\left\vert I_{1}^{\prime
}\left( x\right) \right\vert +\left\vert I_{2}\left( x\right) \right\vert .
\end{equation*}%
and by Lemmas 2, 4 , 
\begin{equation*}
\left\vert I_{1}\left( x\right) \right\vert \leq \frac{1}{\pi }\sum_{m=0}^{%
\left[ r/2\right] }\int_{\frac{2m\pi }{r}}^{\frac{2m\pi }{r}+\frac{\pi }{r}%
}\left\vert \varphi _{x}\left( t\right) \right\vert \left\vert
\sum_{k=0}^{\infty }a_{n,k}D_{k,1}^{\circ }\left( t\right) \right\vert dt
\end{equation*}%
\begin{equation*}
=\frac{1}{\pi }\sum_{m=0}^{\left[ r/2\right] }\left( \int_{\frac{2m\pi }{r}%
}^{\frac{2m\pi }{r}+\frac{\pi }{r\left( n+1\right) }}+\int_{\frac{2m\pi }{r}+%
\frac{\pi }{r\left( n+1\right) }}^{\frac{2m\pi }{r}+\frac{\pi }{r}}\right)
\left\vert \varphi _{x}\left( t\right) \right\vert \left\vert
\sum_{k=0}^{\infty }a_{n,k}D_{k,1}^{\circ }\left( t\right) \right\vert dt
\end{equation*}%
\begin{equation*}
\leq \frac{1}{2}\sum_{m=0}^{\left[ r/2\right] }\int_{\frac{2m\pi }{r}}^{%
\frac{2m\pi }{r}+\frac{\pi }{r\left( n+1\right) }}\frac{\left\vert \varphi
_{x}\left( t\right) \right\vert }{t}dt+\frac{1}{\pi }A_{n,r}\sum_{m=0}^{%
\left[ r/2\right] }\int_{\frac{2m\pi }{r}+\frac{\pi }{r\left( n+1\right) }}^{%
\frac{2m\pi }{r}+\frac{\pi }{r}}\frac{\left\vert \varphi _{x}\left( t\right)
\right\vert }{\left\vert \sin \frac{t}{2}\sin \frac{rt}{2}\right\vert }dt.
\end{equation*}%
Using the estimates $\left\vert \sin \frac{t}{2}\right\vert \geq \frac{%
\left\vert t\right\vert }{\pi }$ for $t\in \left[ 0,\pi \right] ,$ $%
\left\vert \sin \frac{rt}{2}\right\vert \geq \frac{rt}{\pi }-2m$ for $t\in %
\left[ \frac{2m\pi }{r},\frac{2m\pi }{r}+\frac{\pi }{r}\right] ,$ where $%
m\in \left\{ 0,...,\left[ \ r/2\right] \right\} ,$ we obtain%
\begin{equation*}
\left\vert I_{1}\left( x\right) \right\vert \leq \frac{1}{2}\sum_{m=0}^{%
\left[ r/2\right] }\int_{\frac{2m\pi }{r}}^{\frac{2m\pi }{r}+\frac{\pi }{%
r\left( n+1\right) }}\frac{\left\vert \varphi _{x}\left( t\right)
\right\vert }{t}dt+A_{n,r}\sum_{m=0}^{\left[ r/2\right] }\int_{\frac{2m\pi }{%
r}+\frac{\pi }{r\left( n+1\right) }}^{\frac{2m\pi }{r}+\frac{\pi }{r}}\frac{%
\left\vert \varphi _{x}\left( t\right) \right\vert }{t\left( \frac{rt}{\pi }%
-2m\right) }dt
\end{equation*}%
\begin{equation*}
\leq \frac{1}{2}\sum_{m=0}^{\left[ r/2\right] }\left[ \int\limits_{\frac{%
2m\pi }{r}}^{\frac{2m\pi }{r}+\frac{\pi }{r\left( n+1\right) }}\left( \frac{%
\left\vert \varphi _{x}\left( t\right) \right\vert }{\omega \left( t\right) }%
\left\vert \sin \frac{rt}{2}\right\vert ^{\beta }\right) ^{p}dt\right] ^{%
\frac{1}{p}}\left[ \int\limits_{\frac{2m\pi }{r}}^{\frac{2m\pi }{r}+\frac{%
\pi }{r\left( n+1\right) }}\left( \frac{\omega \left( t\right) }{t\left\vert
\sin \frac{rt}{2}\right\vert ^{\beta }}\right) ^{q}dt\right] ^{\frac{1}{q}}
\end{equation*}%
\begin{equation*}
+\frac{\pi }{r}A_{n,r}\sum_{m=0}^{\left[ r/2\right] }\left[ \int\limits_{%
\frac{2m\pi }{r}+\frac{\pi }{r\left( n+1\right) }}^{\frac{2m\pi }{r}+\frac{%
\pi }{r}}\left( \frac{\left\vert \varphi _{x}\left( t\right) \right\vert }{%
\omega \left( t\right) \left( t-\frac{2m\pi }{r}\right) ^{\gamma }}%
\left\vert \sin \frac{rt}{2}\right\vert ^{\beta }\right) ^{p}dt\right] ^{%
\frac{1}{p}}
\end{equation*}%
\begin{equation*}
\left[ \int\limits_{\frac{2m\pi }{r}+\frac{\pi }{r\left( n+1\right) }}^{%
\frac{2m\pi }{r}+\frac{\pi }{r}}\left( \frac{\omega \left( t\right) \left( t-%
\frac{2m\pi }{r}\right) ^{\gamma }}{t\left( t-\frac{2m\pi }{r}\right)
\left\vert \sin \frac{rt}{2}\right\vert ^{\beta }}\right) ^{q}dt\right] ^{%
\frac{1}{q}}
\end{equation*}%
and by $\left( \ref{2.71}\right) ,$ $\left( \ref{2.81}\right) $ with Lemma 7
and $\left( \ref{2.611}\right) $ we have%
\begin{equation*}
\left\vert I_{1}\right\vert =O_{x}\left( 1\right) \sum_{m=0}^{\left[ r/2%
\right] }\left( n+1\right) ^{-\frac{1}{p}}\left( n+1\right) ^{\beta
+1/p}\omega \left( \frac{\pi }{n+1}\right)
\end{equation*}%
\begin{equation*}
+O_{x}\left( 1\right) A_{n,r}\sum_{m=0}^{\left[ r/2\right] }\left(
n+1\right) ^{\gamma }\left[ \int\limits_{\frac{2m\pi }{r}+\frac{\pi }{%
r\left( n+1\right) }}^{\frac{2m\pi }{r}+\frac{\pi }{r}}\left( \frac{\omega
\left( t\right) \left( t-\frac{2m\pi }{r}\right) ^{\gamma }}{t\left( t-\frac{%
2m\pi }{r}\right) \left\vert \sin \frac{rt}{2}\right\vert ^{\beta }}\right)
^{q}dt\right] ^{\frac{1}{q}}.
\end{equation*}%
Since%
\begin{equation*}
\left[ \int\limits_{\frac{2m\pi }{r}+\frac{\pi }{r\left( n+1\right) }}^{%
\frac{2m\pi }{r}+\frac{\pi }{r}}\left( \frac{\omega \left( t\right) \left( t-%
\frac{2m\pi }{r}\right) ^{\gamma }}{t\left( t-\frac{2m\pi }{r}\right)
\left\vert \sin \frac{rt}{2}\right\vert ^{\beta }}\right) ^{q}dt\right] ^{%
\frac{1}{q}}
\end{equation*}%
\begin{equation*}
=\left[ \int\limits_{\frac{2m\pi }{r}+\frac{\pi }{r\left( n+1\right) }}^{%
\frac{2m\pi }{r}+\frac{\pi }{r}}\left( \frac{\omega \left( t\right) \left( t-%
\frac{2m\pi }{r}\right) ^{\gamma }}{t\left( t-\frac{2m\pi }{r}\right)
\left\vert \sin \frac{rt}{2}\right\vert ^{\beta }}\right) ^{q}dt\right] ^{%
\frac{1}{q}}
\end{equation*}%
\begin{equation*}
=\left[ \int\limits_{\frac{\pi }{r\left( n+1\right) }}^{\frac{\pi }{r}%
}\left( \frac{\omega \left( t+\frac{2m\pi }{r}\right) t^{\gamma }}{t\left( t+%
\frac{2m\pi }{r}\right) \left\vert \sin \frac{rt+2m\pi }{2}\right\vert
^{\beta }}\right) ^{q}dt\right] ^{\frac{1}{q}}\leq \left[ \int\limits_{\frac{%
\pi }{r\left( n+1\right) }}^{\frac{\pi }{r}}\left( \frac{2\omega \left(
t\right) t^{\gamma }}{t^{2}\left\vert \sin \frac{rt}{2}\right\vert ^{\beta }}%
\right) ^{q}dt\right] ^{\frac{1}{q}}
\end{equation*}%
\begin{equation*}
\leq \left[ 2^{q}\int\limits_{\frac{\pi }{r\left( n+1\right) }}^{\frac{\pi }{%
r}}\left( \frac{\omega \left( t\right) }{t^{2-\gamma }\left\vert \frac{rt}{%
\pi }\right\vert ^{\beta }}\right) ^{q}dt\right] ^{\frac{1}{q}}\leq \frac{%
\omega \left( \frac{\pi }{r\left( n+1\right) }\right) }{\frac{\pi }{r\left(
n+1\right) }}\left[ 4^{q}\left( \frac{\pi }{r}\right) ^{\beta q}\int\limits_{%
\frac{\pi }{r\left( n+1\right) }}^{\frac{\pi }{r}}t^{\left( \gamma -1-\beta
\right) q}dt\right] ^{\frac{1}{q}}
\end{equation*}%
\begin{equation*}
=O\left( \left( n+1\right) \omega \left( \frac{\pi }{n+1}\right) \right)
\left( \frac{\pi }{r\left( n+1\right) }\right) ^{\left( \gamma -1-\beta
\right) +\frac{1}{q}}=O\left( \left( n+1\right) ^{1-\gamma +\beta +\frac{1}{p%
}}\omega \left( \frac{\pi }{n+1}\right) \right) ,
\end{equation*}%
for $0<\gamma <\beta +\frac{1}{p}.$ Therefore%
\begin{equation*}
\left\vert I_{1}\left( x\right) \right\vert =O_{x}\left( 1\right)
\sum_{m=0}^{\left[ r/2\right] }\left( n+1\right) ^{-\frac{1}{p}}\left(
n+1\right) ^{\beta +\frac{1}{p}}\omega \left( \frac{\pi }{n+1}\right)
\end{equation*}%
\begin{eqnarray*}
&&+O_{x}\left( 1\right) A_{n,r}\sum_{m=0}^{\left[ r/2\right] }\left(
n+1\right) ^{\gamma }\left( n+1\right) ^{1-\gamma +\beta +\frac{1}{p}}\omega
\left( \frac{\pi }{n+1}\right) \\
&=&O_{x}\left( \left( n+1\right) ^{\beta }\omega \left( \frac{\pi }{n+1}%
\right) \right) +O_{x}\left( \left( n+1\right) ^{1+\beta +\frac{1}{p}%
}A_{n,r}\omega \left( \frac{\pi }{n+1}\right) \right) .
\end{eqnarray*}%
Analogously,%
\begin{equation*}
\left\vert I_{1}^{\prime }\left( x\right) \right\vert =O_{x}\left( 1\right)
\sum_{m=0}^{\left[ r/2\right] -1}\left( n+1\right) ^{-\frac{1}{p}}\left(
n+1\right) ^{\beta +\frac{1}{p}}\omega \left( \frac{\pi }{n+1}\right)
\end{equation*}%
\begin{eqnarray*}
&&+O_{x}\left( 1\right) A_{n,r}\sum_{m=0}^{\left[ r/2\right] -1}\left(
n+1\right) ^{\gamma }\left( n+1\right) ^{1-\gamma +\beta +\frac{1}{p}}\omega
\left( \frac{\pi }{n+1}\right) \\
&=&O_{x}\left( \left( n+1\right) ^{\beta }\omega \left( \frac{\pi }{n+1}%
\right) \right) +O_{x}\left( \left( n+1\right) ^{1+\beta +\frac{1}{p}%
}A_{n,r}\omega \left( \frac{\pi }{n+1}\right) \right) .
\end{eqnarray*}%
Similarly,%
\begin{equation*}
\left\vert I_{2}\left( x\right) \right\vert \leq \frac{1}{\pi }\sum_{m=0}^{%
\left[ r/2\right] -1}\int_{\frac{2m\pi }{r}+\frac{\pi }{r}}^{\frac{2\left(
m+1\right) \pi }{r}}\left\vert \varphi _{x}\left( t\right) \right\vert
\left\vert \sum_{k=0}^{\infty }a_{n,k}D_{k,1}^{\circ }\left( t\right)
\right\vert dt
\end{equation*}%
\begin{eqnarray*}
&=&\frac{1}{\pi }\sum_{m=0}^{\left[ r/2\right] -1}\left( \int_{\frac{2\left(
m+1\right) \pi }{r}-\frac{\pi }{r}}^{\frac{2\left( m+1\right) \pi }{r}-\frac{%
\pi }{r\left( n+1\right) }}+\int_{\frac{2\left( m+1\right) \pi }{r}-\frac{%
\pi }{r\left( n+1\right) }}^{\frac{2\left( m+1\right) \pi }{r}}\right)
\left\vert \varphi _{x}\left( t\right) \right\vert \left\vert
\sum_{k=0}^{\infty }a_{n,k}D_{k,1}^{\circ }\left( t\right) \right\vert dt \\
&\leq &\frac{1}{\pi }\sum_{m=0}^{\left[ r/2\right] -1}\int_{\frac{2\left(
m+1\right) \pi }{r}-\frac{\pi }{r}}^{\frac{2\left( m+1\right) \pi }{r}-\frac{%
\pi }{r\left( n+1\right) }}\frac{\left\vert \varphi _{x}\left( t\right)
\right\vert }{\left\vert \sin \frac{t}{2}\sin \frac{rt}{2}\right\vert }%
A_{n,r}dt \\
&&+\frac{1}{\pi }\sum_{m=0}^{\left[ r/2\right] -1}\int_{\frac{2\left(
m+1\right) \pi }{r}-\frac{\pi }{r\left( n+1\right) }}^{\frac{2\left(
m+1\right) \pi }{r}}\frac{\left\vert \varphi _{x}\left( t\right) \right\vert 
}{t}dt
\end{eqnarray*}%
and by the estimates $\left\vert \sin \frac{t}{2}\right\vert \geq \frac{%
\left\vert t\right\vert }{\pi }$ for $t\in \left[ 0,\pi \right] ,$ $%
\left\vert \sin \frac{rt}{2}\right\vert \geq 2\left( m+1\right) -\frac{rt}{%
\pi }$ for $t\in \left[ \frac{2\left( m+1\right) \pi }{r}-\frac{\pi }{r},%
\frac{2\left( m+1\right) \pi }{r}-\frac{\pi }{r\left( n+1\right) }\right] ,$
where $m\in \left\{ 0,...,\left[ r/2\right] -1\right\} ,$ we get%
\begin{eqnarray*}
&&\left\vert I_{2}\left( x\right) \right\vert \\
&\leq &A_{n,r}\sum_{m=0}^{\left[ r/2\right] -1}\int\limits_{\frac{2\left(
m+1\right) \pi }{r}-\frac{\pi }{r}}^{\frac{2\left( m+1\right) \pi }{r}-\frac{%
\pi }{r\left( n+1\right) }}\frac{\left\vert \varphi _{x}\left( t\right)
\right\vert }{\frac{rt}{\pi }\left( \frac{2\left( m+1\right) \pi }{r}%
-t\right) }dt+\frac{1}{\pi }\sum_{m=0}^{\left[ r/2\right] -1}\int\limits_{%
\frac{2\left( m+1\right) \pi }{r}-\frac{\pi }{r\left( n+1\right) }}^{\frac{%
2\left( m+1\right) \pi }{r}}\frac{\left\vert \varphi _{x}\left( t\right)
\right\vert }{t}dt \\
&\leq &\frac{\pi }{r}A_{n,r}\sum_{m=0}^{\left[ r/2\right] -1}\left[
\int\limits_{\frac{2\left( m+1\right) \pi }{r}-\frac{\pi }{r}}^{\frac{%
2\left( m+1\right) \pi }{r}-\frac{\pi }{r\left( n+1\right) }}\left( \frac{%
\left\vert \varphi _{x}\left( t\right) \right\vert }{\omega \left( t\right)
\left( \frac{2\left( m+1\right) \pi }{r}-t\right) ^{\gamma }}\left\vert \sin 
\frac{rt}{2}\right\vert ^{\beta }\right) ^{p}dt\right] ^{\frac{1}{p}}
\end{eqnarray*}%
\begin{eqnarray*}
&&\left[ \int\limits_{\frac{2\left( m+1\right) \pi }{r}-\frac{\pi }{r}}^{%
\frac{2\left( m+1\right) \pi }{r}-\frac{\pi }{r\left( n+1\right) }}\left( 
\frac{\omega \left( t\right) \left( \frac{2\left( m+1\right) \pi }{r}%
-t\right) ^{\gamma }}{t\left( \frac{2\left( m+1\right) \pi }{r}-t\right)
\left\vert \sin \frac{rt}{2}\right\vert ^{\beta }}\right) ^{q}dt\right] ^{%
\frac{1}{q}} \\
&&+\frac{1}{\pi }\sum_{m=0}^{\left[ r/2\right] -1}\left[ \int\limits_{\frac{%
2\left( m+1\right) \pi }{r}-\frac{\pi }{r\left( n+1\right) }}^{\frac{2\left(
m+1\right) \pi }{r}}\left( \frac{\left\vert \varphi _{x}\left( t\right)
\right\vert }{\omega \left( t\right) }\left\vert \sin \frac{rt}{2}%
\right\vert ^{\beta }\right) ^{p}dt\right] ^{\frac{1}{p}}\left[ \int\limits_{%
\frac{2\left( m+1\right) \pi }{r}-\frac{\pi }{r\left( n+1\right) }}^{\frac{%
2\left( m+1\right) \pi }{r}}\left( \frac{\omega \left( t\right) }{%
t\left\vert \sin \frac{rt}{2}\right\vert ^{\beta }}\right) ^{q}dt\right] ^{%
\frac{1}{q}}
\end{eqnarray*}%
and by $\left( \ref{2.61}\right) ,$ $\left( \ref{2.81}\right) $ with Lemma 6
and $\left( \ref{2.63}\right) $ we have%
\begin{eqnarray*}
&&\left\vert I_{2}\left( x\right) \right\vert \\
&\leq &\frac{\pi }{r}A_{n,r}\sum_{m=0}^{\left[ r/2\right] -1}O_{x}\left(
\left( n+1\right) ^{\gamma }\right) \left[ \int\limits_{\frac{2\left(
m+1\right) \pi }{r}-\frac{\pi }{r}}^{\frac{2\left( m+1\right) \pi }{r}-\frac{%
\pi }{r\left( n+1\right) }}\left( \frac{\omega \left( t\right) \left( \frac{%
2\left( m+1\right) \pi }{r}-t\right) ^{\gamma }}{t\left( \frac{2\left(
m+1\right) \pi }{r}-t\right) \left\vert \sin \frac{rt}{2}\right\vert ^{\beta
}}\right) ^{q}dt\right] ^{\frac{1}{q}} \\
&&+\frac{1}{\pi }\sum_{m=0}^{\left[ r/2\right] -1}O_{x}\left( \left(
n+1\right) ^{-1/p}\right) O_{x}\left( \left( n+1\right) ^{\beta +1/p}\omega
\left( \frac{\pi }{n+1}\right) \right) .
\end{eqnarray*}%
Since%
\begin{equation*}
\left[ \int\limits_{\frac{2\left( m+1\right) \pi }{r}-\frac{\pi }{r}}^{\frac{%
2\left( m+1\right) \pi }{r}-\frac{\pi }{r\left( n+1\right) }}\left( \frac{%
\omega \left( t\right) \left( \frac{2\left( m+1\right) \pi }{r}-t\right)
^{\gamma }}{t\left( \frac{2\left( m+1\right) \pi }{r}-t\right) \left\vert
\sin \frac{rt}{2}\right\vert ^{\beta }}\right) ^{q}dt\right] ^{\frac{1}{q}}
\end{equation*}%
\begin{eqnarray*}
&=&\left[ \int\limits_{\frac{\pi }{r\left( n+1\right) }}^{\frac{\pi }{r}%
}\left( \frac{\omega \left( t-\frac{2\left( m+1\right) \pi }{r}\right)
t^{\gamma }}{t\left( t-\frac{2\left( m+1\right) \pi }{r}\right) \left\vert
\sin \frac{-rt+2\left( m+1\right) \pi }{2}\right\vert ^{\beta }}\right)
^{q}dt\right] ^{\frac{1}{q}} \\
&\leq &\left[ \int\limits_{\frac{\pi }{r\left( n+1\right) }}^{\frac{\pi }{r}%
}\left( \frac{2\omega \left( t\right) t^{\gamma }}{t^{2}\left\vert \sin 
\frac{rt}{2}\right\vert ^{\beta }}\right) ^{q}dt\right] ^{\frac{1}{q}}
\end{eqnarray*}%
\begin{eqnarray*}
&\leq &\left[ 2^{q}\int\limits_{\frac{\pi }{r\left( n+1\right) }}^{\frac{\pi 
}{r}}\left( \frac{\omega \left( t\right) t^{\gamma }}{t^{2}\left\vert \frac{%
rt}{\pi }\right\vert ^{\beta }}\right) ^{q}dt\right] ^{\frac{1}{q}} \\
&\leq &\frac{\omega \left( \frac{\pi }{r\left( n+1\right) }\right) }{\frac{%
\pi }{r\left( n+1\right) }}\left[ 4^{q}\left( \frac{\pi }{r}\right) ^{\beta
q}\int\limits_{\frac{\pi }{r\left( n+1\right) }}^{\frac{\pi }{r}}t^{\left(
\gamma -1-\beta \right) q}dt\right] ^{\frac{1}{q}}
\end{eqnarray*}%
\begin{eqnarray*}
&=&O\left( \left( n+1\right) \omega \left( \frac{\pi }{n+1}\right) \right)
\left( \frac{\pi }{r\left( n+1\right) }\right) ^{\left( \gamma -1-\beta
\right) +1/q} \\
&=&O\left( \left( n+1\right) ^{1-\gamma +\beta +\frac{1}{p}}\omega \left( 
\frac{\pi }{n+1}\right) \right) ,
\end{eqnarray*}%
for $0<\gamma <\beta +\frac{1}{p}.$ Therefore%
\begin{equation*}
\left\vert I_{2}\right\vert =O_{x}\left( 1\right) A_{n,r}\sum_{m=0}^{\left[
r/2\right] -1}\left( n+1\right) ^{\gamma }\left( n+1\right) ^{1-\gamma
+\beta +\frac{1}{p}}\omega \left( \frac{\pi }{n+1}\right)
\end{equation*}%
\begin{eqnarray*}
&&+O_{x}\left( 1\right) \sum_{m=0}^{\left[ r/2\right] -1}\left( n+1\right)
^{-1/p}\left( n+1\right) ^{\beta +1/p}\omega \left( \frac{\pi }{n+1}\right)
\\
&=&O_{x}\left( \left( n+1\right) ^{1+\beta +\frac{1}{p}}A_{n,r}\omega \left( 
\frac{\pi }{n+1}\right) \right) \\
&&+O_{x}\left( \left( n+1\right) ^{\beta }\omega \left( \frac{\pi }{n+1}%
\right) \right) .
\end{eqnarray*}

Finally, we note that applying condition $\left( \ref{113}\right) $ we have 
\begin{eqnarray*}
&&\left[ \left( n+1\right) A_{n,r}\right] ^{-1}=\left[ \sum_{l=0}^{n}A_{n,r}%
\right] ^{-1}\leq \left[ \sum_{l=0}^{n}\sum_{k=l}^{\infty }\left\vert
a_{n,k}-a_{n,k+r}\right\vert \right] ^{-1} \\
&\leq &\left[ \sum_{l=0}^{n}\left\vert \sum_{k=l}^{\infty }\left(
a_{n,k}-a_{n,k+r}\right) \right\vert \right] ^{-1}=\left[ \sum_{l=0}^{n}%
\sum_{k=l}^{r+l-1}a_{n,k}\right] ^{-1}=O\left( 1\right)
\end{eqnarray*}%
and our proof is complete. $\blacksquare $

\subsection{Proof of Theorem 2}

It is clear that for an odd $r$%
\begin{eqnarray*}
&&\widetilde{T}_{n,A}^{\text{ }}f\left( x\right) -\widetilde{f}\left( x,%
\frac{\pi }{r\left( n+1\right) }\right) \\
&=&-\frac{1}{\pi }\int_{0}^{\frac{\pi }{r\left( n+1\right) }}\psi _{x}\left(
t\right) \sum_{k=0}^{\infty }a_{n,k}\widetilde{D}_{k,1}\left( t\right) dt+%
\frac{1}{\pi }\int_{\frac{\pi }{r\left( n+1\right) }}^{\frac{\pi }{r}}\psi
_{x}\left( t\right) \sum_{k=0}^{\infty }a_{n,k}\widetilde{D^{\circ }}%
_{k,1}\left( t\right) dt
\end{eqnarray*}%
\begin{eqnarray*}
&&+\frac{1}{\pi }\sum_{m=1}^{\left[ r/2\right] }\int_{\frac{2m\pi }{r}}^{%
\frac{2m\pi }{r}+\frac{\pi }{r}}\psi _{x}\left( t\right) \sum_{k=0}^{\infty
}a_{n,k}\widetilde{D^{\circ }}_{k,1}\left( t\right) dt \\
&&+\frac{1}{\pi }\sum_{m=0}^{\left[ r/2\right] -1}\int_{\frac{2m\pi }{r}+%
\frac{\pi }{r}}^{\frac{2\left( m+1\right) \pi }{r}}\psi _{x}\left( t\right)
\sum_{k=0}^{\infty }a_{n,k}\widetilde{D^{\circ }}_{k,1}\left( t\right) dt \\
&=&J_{1}\left( x\right) +J_{2}\left( x\right) +J_{3}\left( x\right)
+J_{4}\left( x\right)
\end{eqnarray*}%
and for an even $r$%
\begin{eqnarray*}
&&\widetilde{T}_{n,A}^{\text{ }}f\left( x\right) -\widetilde{f}\left( x,%
\frac{\pi }{r\left( n+1\right) }\right) \\
&=&-\frac{1}{\pi }\int_{0}^{\frac{\pi }{r\left( n+1\right) }}\psi _{x}\left(
t\right) \sum_{k=0}^{\infty }a_{n,k}\widetilde{D}_{k,1}\left( t\right) dt+%
\frac{1}{\pi }\int_{\frac{\pi }{r\left( n+1\right) }}^{\frac{\pi }{r}}\psi
_{x}\left( t\right) \sum_{k=0}^{\infty }a_{n,k}\widetilde{D^{\circ }}%
_{k,1}\left( t\right) dt
\end{eqnarray*}%
\begin{eqnarray*}
&&+\frac{1}{\pi }\sum_{m=1}^{\left[ r/2\right] -1}\int_{\frac{2m\pi }{r}}^{%
\frac{2m\pi }{r}+\frac{\pi }{r}}\psi _{x}\left( t\right) \sum_{k=0}^{\infty
}a_{n,k}\widetilde{D^{\circ }}_{k,1}\left( t\right) dt \\
&&+\frac{1}{\pi }\sum_{m=0}^{\left[ r/2\right] -1}\int_{\frac{2m\pi }{r}+%
\frac{\pi }{r}}^{\frac{2\left( m+1\right) \pi }{r}}\psi _{x}\left( t\right)
\sum_{k=0}^{\infty }a_{n,k}\widetilde{D^{\circ }}_{k,1}\left( t\right) dt \\
&=&J_{1}\left( x\right) +J_{2}\left( x\right) +J_{3}^{\prime }\left(
x\right) +J_{4}\left( x\right) .
\end{eqnarray*}%
Then,%
\begin{equation*}
\left\vert \widetilde{T}_{n,A}^{\text{ }}f\left( x\right) -\widetilde{f}%
\left( x,\frac{\pi }{r\left( n+1\right) }\right) \right\vert \leq \left\vert
J_{1}\left( x\right) \right\vert +\left\vert J_{2}\left( x\right)
\right\vert +\left\vert J_{3}\left( x\right) \right\vert +\left\vert
J_{3}^{\prime }\left( x\right) \right\vert +\left\vert J_{4}\left( x\right)
\right\vert
\end{equation*}%
and by Lemma 2%
\begin{equation*}
\left\vert J_{1}\left( x\right) \right\vert \leq \frac{1}{\pi }\int_{0}^{%
\frac{\pi }{r\left( n+1\right) }}\left\vert \psi _{x}\left( t\right)
\right\vert \left\vert \sum_{k=0}^{\infty }a_{n,k}\widetilde{D}_{k,1}\left(
t\right) \right\vert dt\leq \frac{1}{2\pi }\int_{0}^{\frac{\pi }{r\left(
n+1\right) }}\left\vert \psi _{x}\left( t\right) \right\vert
\sum_{k=0}^{\infty }a_{n,k}\frac{1}{2}k\left( k+1\right) \left\vert
t\right\vert dt
\end{equation*}%
whence by $\left( \ref{115}\right) $ and $\left( \ref{1115}\right) $ 
\begin{eqnarray*}
\left\vert J_{1}\left( x\right) \right\vert &\leq &O\left( \left( n+1\right)
^{2}\right) \int_{0}^{\frac{\pi }{r\left( n+1\right) }}\left\vert \psi
_{x}\left( t\right) \right\vert \left\vert t\right\vert dt \\
&\leq &O\left( \left( n+1\right) ^{2}\right) \left\{ \int_{0}^{\frac{\pi }{%
r\left( n+1\right) }}\left( \frac{t\left\vert \psi _{x}\left( t\right)
\right\vert }{\widetilde{\omega }\left( t\right) }\right) ^{p}\sin ^{\beta p}%
\frac{rt}{2}dt\right\} ^{1/p}\left\{ \int_{0}^{\frac{\pi }{r\left(
n+1\right) }}\left( \frac{\widetilde{\omega }\left( t\right) }{\sin ^{\beta }%
\frac{rt}{2}}\right) ^{q}dt\right\} ^{\frac{1}{q}}
\end{eqnarray*}%
\begin{eqnarray*}
&\leq &O\left( \left( n+1\right) ^{2}\right) O_{x}\left( \left( n+1\right)
^{-1}\right) \widetilde{\omega }\left( \frac{\pi }{r\left( n+1\right) }%
\right) \left\{ \int_{0}^{\frac{\pi }{r\left( n+1\right) }}\left( \frac{\pi 
}{rt}\right) ^{\beta q}dt\right\} ^{\frac{1}{q}} \\
&=&O_{x}\left( n+1\right) \widetilde{\omega }\left( \frac{\pi }{r\left(
n+1\right) }\right) \left( \frac{\pi }{r\left( n+1\right) }\right) ^{\frac{1%
}{q}-\beta }=O_{x}\left( \left( n+1\right) ^{\beta +\frac{1}{p}}\right) 
\widetilde{\omega }\left( \frac{\pi }{n+1}\right) ,
\end{eqnarray*}%
for\textit{\ }$\beta <1-\frac{1}{p}.$

Next, by Lemmas 2 and 5

\begin{eqnarray*}
&&\left\vert J_{2}\left( x\right) \right\vert +\left\vert J_{3}\left(
x\right) \right\vert +\left\vert J_{3}^{\prime }\left( x\right) \right\vert
\\
&\leq &\frac{1}{\pi }\left( \sum\limits_{m=1}^{\left[ r/2\right] }\int_{%
\frac{2m\pi }{r}}^{\frac{2m\pi }{r}+\frac{\pi }{r\left( n+1\right) }%
}+\sum\limits_{m=0}^{\left[ r/2\right] }\int_{\frac{2m\pi }{r}+\frac{\pi }{%
r\left( n+1\right) }}^{\frac{2m\pi }{r}+\frac{\pi }{r}}\right) \left\vert
\psi _{x}\left( t\right) \right\vert \left\vert \sum\limits_{k=0}^{\infty
}a_{n,k}\widetilde{D^{\circ }}_{k,1}\left( t\right) \right\vert dt \\
&\leq &\frac{1}{2}\sum_{m=1}^{\left[ r/2\right] }\int_{\frac{2m\pi }{r}}^{%
\frac{2m\pi }{r}+\frac{\pi }{r\left( n+1\right) }}\frac{\left\vert \psi
_{x}\left( t\right) \right\vert }{t}dt+\frac{1}{\pi }\sum_{m=0}^{\left[ r/2%
\right] }\int_{\frac{2m\pi }{r}+\frac{\pi }{r\left( n+1\right) }}^{\frac{%
2m\pi }{r}+\frac{\pi }{r}}\frac{\left\vert \psi _{x}\left( t\right)
\right\vert }{\left\vert \sin \frac{t}{2}\sin \frac{rt}{2}\right\vert }%
A_{n,r}dt..
\end{eqnarray*}%
Using the estimates $\left\vert \sin \frac{t}{2}\right\vert \geq \frac{%
\left\vert t\right\vert }{\pi }$ for $t\in \left[ 0,\pi \right] ,$ $%
\left\vert \sin \frac{rt}{2}\right\vert \geq \frac{rt}{\pi }-2m$ for $t\in %
\left[ \frac{2m\pi }{r},\frac{2m\pi }{r}+\frac{\pi }{r}\right] ,$ where $%
m\in \left\{ 0,...,\left[ \ r/2\right] \right\} ,$ we obtain 
\begin{eqnarray*}
&&\left\vert J_{2}\left( x\right) \right\vert +\left\vert J_{3}\left(
x\right) \right\vert +\left\vert J_{3}^{\prime }\left( x\right) \right\vert
\\
&\leq &\frac{1}{2}\sum_{m=1}^{\left[ r/2\right] }\int_{\frac{2m\pi }{r}}^{%
\frac{2m\pi }{r}+\frac{\pi }{r\left( n+1\right) }}\frac{\left\vert \psi
_{x}\left( t\right) \right\vert }{t}dt+A_{n,r}\sum_{m=0}^{\left[ r/2\right]
}\int_{\frac{2m\pi }{r}+\frac{\pi }{r\left( n+1\right) }}^{\frac{2m\pi }{r}+%
\frac{\pi }{r}}\frac{\left\vert \psi _{x}\left( t\right) \right\vert }{%
t\left( \frac{rt}{\pi }-2m\right) }dt
\end{eqnarray*}%
\begin{eqnarray*}
&=&\frac{1}{2}\sum_{m=1}^{\left[ r/2\right] }\int_{\frac{2m\pi }{r}}^{\frac{%
2m\pi }{r}+\frac{\pi }{r\left( n+1\right) }}\frac{\left\vert \psi _{x}\left(
t\right) \right\vert }{t}dt+A_{n,r}\sum_{m=0}^{\left[ r/2\right] }\int_{%
\frac{2m\pi }{r}+\frac{\pi }{r\left( n+1\right) }}^{\frac{2m\pi }{r}+\frac{%
\pi }{r}}\frac{\left\vert \psi _{x}\left( t\right) \right\vert }{\frac{rt}{%
\pi }\left( t-\frac{2m\pi }{r}\right) }dt \\
&=&\frac{1}{2}\sum_{m=1}^{\left[ r/2\right] }\left[ \int\limits_{\frac{2m\pi 
}{r}}^{\frac{2m\pi }{r}+\frac{\pi }{r\left( n+1\right) }}\left( \frac{%
\left\vert \psi _{x}\left( t\right) \right\vert }{\widetilde{\omega }\left(
t\right) }\left\vert \sin \frac{rt}{2}\right\vert ^{\beta }\right) ^{p}dt%
\right] ^{\frac{1}{p}}\left[ \int\limits_{\frac{2m\pi }{r}}^{\frac{2m\pi }{r}%
+\frac{\pi }{r\left( n+1\right) }}\left( \frac{\widetilde{\omega }\left(
t\right) }{t\left\vert \sin \frac{rt}{2}\right\vert ^{\beta }}\right) ^{q}dt%
\right] ^{\frac{1}{q}}
\end{eqnarray*}%
\begin{eqnarray*}
&&+A_{n,r}\sum_{m=0}^{\left[ r/2\right] }\left[ \int\limits_{\frac{2m\pi }{r}%
+\frac{\pi }{r\left( n+1\right) }}^{\frac{2m\pi }{r}+\frac{\pi }{r}}\left( 
\frac{\left\vert \psi _{x}\left( t\right) \right\vert }{\widetilde{\omega }%
\left( t\right) \left( t-\frac{2m\pi }{r}\right) ^{\gamma }}\left\vert \sin 
\frac{rt}{2}\right\vert ^{\beta }\right) ^{p}dt\right] ^{\frac{1}{p}} \\
&&\left[ \int\limits_{\frac{2m\pi }{r}+\frac{\pi }{r\left( n+1\right) }}^{%
\frac{2m\pi }{r}+\frac{\pi }{r}}\left( \frac{\widetilde{\omega }\left(
t\right) \left( t-\frac{2m\pi }{r}\right) ^{\gamma }}{t\left( \frac{rt}{\pi }%
-2m\right) \left\vert \sin \frac{rt}{2}\right\vert ^{\beta }}\right) ^{q}dt%
\right] ^{\frac{1}{q}}.
\end{eqnarray*}%
Further, analogously as in the above proof, and by Lemma 1, $\left( \ref%
{2.711}\right) ,$ $\left( \ref{2.6111}\right) $ and $\left( \ref{2.811}%
\right) $ with Lemma 7, 
\begin{eqnarray*}
&&\left\vert J_{2}\left( x\right) \right\vert +\left\vert J_{3}\left(
x\right) \right\vert +\left\vert J_{3}^{\prime }\left( x\right) \right\vert
\\
&=&O_{x}\left( 1\right) \sum_{m=1}^{\left[ r/2\right] }\left( n+1\right) ^{-%
\frac{1}{p}}\left( n+1\right) ^{\beta +\frac{1}{p}}\widetilde{\omega }\left( 
\frac{\pi }{n+1}\right)
\end{eqnarray*}%
\begin{equation*}
+O_{x}\left( 1\right) A_{n,r}\sum_{m=0}^{\left[ r/2\right] }\left(
n+1\right) ^{\gamma }\left[ \int\limits_{\frac{2m\pi }{r}+\frac{\pi }{%
r\left( n+1\right) }}^{\frac{2m\pi }{r}+\frac{\pi }{r}}\left( \frac{%
\widetilde{\omega }\left( t\right) \left( t-\frac{2m\pi }{r}\right) ^{\gamma
}}{t\left( \frac{rt}{\pi }-2m\right) \left\vert \sin \frac{rt}{2}\right\vert
^{\beta }}\right) ^{q}dt\right] ^{\frac{1}{q}}.
\end{equation*}%
\begin{eqnarray*}
&=&O_{x}\left( 1\right) \left( n+1\right) ^{\beta }\widetilde{\omega }\left( 
\frac{\pi }{n+1}\right) +O_{x}\left( 1\right) A_{n,r}\sum_{m=0}^{\left[ r/2%
\right] }\left( n+1\right) ^{\gamma }\left( n+1\right) ^{1-\gamma +\beta +%
\frac{1}{p}}\widetilde{\omega }\left( \frac{\pi }{n+1}\right) \\
&=&O_{x}\left( \left( n+1\right) ^{\beta }\widetilde{\omega }\left( \frac{%
\pi }{n+1}\right) \right) +O_{x}\left( \left( n+1\right) ^{1+\beta +\frac{1}{%
p}}A_{n,r}\widetilde{\omega }\left( \frac{\pi }{n+1}\right) \right) .
\end{eqnarray*}%
Similarly, by Lemmas 2, 5 and using the estimates $\left\vert \sin \frac{t}{2%
}\right\vert \geq \frac{\left\vert t\right\vert }{\pi }$ for $t\in \left[
0,\pi \right] ,$ $\left\vert \sin \frac{rt}{2}\right\vert \geq 2\left(
m+1\right) -\frac{rt}{\pi }$ for $t\in \left[ \frac{2\left( m+1\right) \pi }{%
r}-\frac{\pi }{r},\frac{2\left( m+1\right) \pi }{r}-\frac{\pi }{r\left(
n+1\right) }\right] ,$ where $m\in \left\{ 0,...,\left[ r/2\right]
-1\right\} ,$ we get%
\begin{eqnarray*}
&&\left\vert J_{4}\left( x\right) \right\vert \\
&\leq &\frac{1}{\pi }\sum_{m=0}^{\left[ r/2\right] -1}\int_{\frac{2m\pi }{r}+%
\frac{\pi }{r}}^{\frac{2\left( m+1\right) \pi }{r}}\left\vert \psi
_{x}\left( t\right) \right\vert \left\vert \sum_{k=0}^{\infty }a_{n,k}%
\widetilde{D^{\circ }}_{k,1}\left( t\right) \right\vert dt \\
&=&\frac{1}{\pi }\sum_{m=0}^{\left[ r/2\right] -1}\left( \int_{\frac{2\left(
m+1\right) \pi }{r}-\frac{\pi }{r}}^{\frac{2\left( m+1\right) \pi }{r}-\frac{%
\pi }{r\left( n+1\right) }}+\int_{\frac{2\left( m+1\right) \pi }{r}-\frac{%
\pi }{r\left( n+1\right) }}^{\frac{2\left( m+1\right) \pi }{r}}\right)
\left\vert \psi _{x}\left( t\right) \right\vert \left\vert
\sum_{k=0}^{\infty }a_{n,k}\widetilde{D^{\circ }}_{k,1}\left( t\right)
\right\vert dt
\end{eqnarray*}%
\begin{equation*}
\leq \frac{1}{2}\sum_{m=0}^{\left[ r/2\right] -1}\int_{\frac{2\left(
m+1\right) \pi }{r}-\frac{\pi }{r\left( n+1\right) }}^{\frac{2\left(
m+1\right) \pi }{r}}\frac{\left\vert \psi _{x}\left( t\right) \right\vert }{t%
}dt+\frac{1}{\pi }A_{n,r}\sum_{m=0}^{\left[ r/2\right] -1}\int_{\frac{%
2\left( m+1\right) \pi }{r}-\frac{\pi }{r}}^{\frac{2\left( m+1\right) \pi }{r%
}-\frac{\pi }{r\left( n+1\right) }}\frac{\left\vert \psi _{x}\left( t\right)
\right\vert }{\left\vert \sin \frac{t}{2}\sin \frac{rt}{2}\right\vert }dt
\end{equation*}%
\begin{equation*}
\leq \frac{1}{2}\sum_{m=0}^{\left[ r/2\right] -1}\int_{\frac{2\left(
m+1\right) \pi }{r}-\frac{\pi }{r\left( n+1\right) }}^{\frac{2\left(
m+1\right) \pi }{r}}\frac{\left\vert \psi _{x}\left( t\right) \right\vert }{t%
}dt+A_{n,r}\sum_{m=0}^{\left[ r/2\right] -1}\int_{\frac{2\left( m+1\right)
\pi }{r}-\frac{\pi }{r}}^{\frac{2\left( m+1\right) \pi }{r}-\frac{\pi }{%
r\left( n+1\right) }}\frac{\left\vert \psi _{x}\left( t\right) \right\vert }{%
\frac{rt}{\pi }\left[ \frac{2\left( m+1\right) \pi }{r}-t\right] }dt
\end{equation*}%
\begin{equation*}
\leq \frac{1}{2}\sum_{m=0}^{\left[ r/2\right] -1}\left[ \int\limits_{\frac{%
2\left( m+1\right) \pi }{r}-\frac{\pi }{r\left( n+1\right) }}^{\frac{2\left(
m+1\right) \pi }{r}}\left( \frac{\left\vert \psi _{x}\left( t\right)
\right\vert }{\widetilde{\omega }\left( t\right) }\left\vert \sin \frac{rt}{2%
}\right\vert ^{\beta }\right) ^{p}dt\right] ^{\frac{1}{p}}\left[
\int\limits_{\frac{2\left( m+1\right) \pi }{r}-\frac{\pi }{r\left(
n+1\right) }}^{\frac{2\left( m+1\right) \pi }{r}}\left( \frac{\widetilde{%
\omega }\left( t\right) }{t\left\vert \sin \frac{rt}{2}\right\vert ^{\beta }}%
\right) ^{q}dt\right] ^{\frac{1}{q}}
\end{equation*}%
\begin{eqnarray*}
&&+\frac{\pi }{r}A_{n,r}\sum_{m=0}^{\left[ r/2\right] -1}\left[ \int\limits_{%
\frac{2\left( m+1\right) \pi }{r}-\frac{\pi }{r}}^{\frac{2\left( m+1\right)
\pi }{r}-\frac{\pi }{r\left( n+1\right) }}\left( \frac{\left\vert \psi
_{x}\left( t\right) \right\vert }{\widetilde{\omega }\left( t\right) \left( 
\frac{2\left( m+1\right) \pi }{r}-t\right) ^{\gamma }}\left\vert \sin \frac{%
rt}{2}\right\vert ^{\beta }\right) ^{p}dt\right] ^{\frac{1}{p}} \\
&&\left[ \int\limits_{\frac{2\left( m+1\right) \pi }{r}-\frac{\pi }{r}}^{%
\frac{2\left( m+1\right) \pi }{r}-\frac{\pi }{r\left( n+1\right) }}\left( 
\frac{\widetilde{\omega }\left( t\right) \left( \frac{2\left( m+1\right) \pi 
}{r}-t\right) ^{\gamma }}{t\left( \frac{2\left( m+1\right) \pi }{r}-t\right)
\left\vert \sin \frac{rt}{2}\right\vert ^{\beta }}\right) ^{q}dt\right] ^{%
\frac{1}{q}}
\end{eqnarray*}%
and by $\left( \ref{2.61111}\right) ,$ $\left( \ref{2.811}\right) $ with
Lemma 6 and $\left( \ref{2.6311}\right) $ also analogously as in the above
proof we have%
\begin{eqnarray*}
&&\left\vert J_{4}\left( x\right) \right\vert \\
&\leq &\frac{1}{\pi }\sum_{m=0}^{\left[ r/2\right] -1}O_{x}\left( \left(
n+1\right) ^{-1/p}\right) O_{x}\left( \left( n+1\right) ^{\beta +1/p}%
\widetilde{\omega }\left( \frac{\pi }{n+1}\right) \right) \\
&&+\frac{\pi }{r}A_{n,r}\sum_{m=0}^{\left[ r/2\right] -1}O_{x}\left( \left(
n+1\right) ^{\gamma }\right) \left[ \int\limits_{\frac{2\left( m+1\right)
\pi }{r}-\frac{\pi }{r}}^{\frac{2\left( m+1\right) \pi }{r}-\frac{\pi }{%
r\left( n+1\right) }}\left( \frac{\widetilde{\omega }\left( t\right) \left( 
\frac{2\left( m+1\right) \pi }{r}-t\right) ^{\gamma }}{t\left( \frac{2\left(
m+1\right) \pi }{r}-t\right) \left\vert \sin \frac{rt}{2}\right\vert ^{\beta
}}\right) ^{q}dt\right] ^{\frac{1}{q}}.
\end{eqnarray*}%
\begin{eqnarray*}
&\leq &O_{x}\left( \left( n+1\right) ^{\beta }\widetilde{\omega }\left( 
\frac{\pi }{n+1}\right) \right) +\frac{\pi }{r}A_{n,r}\sum_{m=0}^{\left[ r/2%
\right] -1}O_{x}\left( \left( n+1\right) ^{\gamma }\right) \left( n+1\right)
^{1-\gamma +\beta +\frac{1}{p}}\widetilde{\omega }\left( \frac{\pi }{n+1}%
\right) \\
&=&O_{x}\left( \left( n+1\right) ^{\beta }\widetilde{\omega }\left( \frac{%
\pi }{n+1}\right) \right) +O_{x}\left( \left( n+1\right) ^{1+\beta +\frac{1}{%
p}}A_{n,r}\widetilde{\omega }\left( \frac{\pi }{n+1}\right) \right) ,
\end{eqnarray*}%
for $0<\gamma <\beta +\frac{1}{p}.$ Collecting these estimates and applying
condition $\left( \ref{113}\right) $ we obtain the desired result. $%
\blacksquare $

\subsection{Proof of Theorem 3}

Analogously, as in the proofs of Theorems 1 and 2, we consider an odd $r$
and an even $r.$ Then, for an odd $r$%
\begin{eqnarray*}
&&\widetilde{T}_{n,A}^{\text{ }}f\left( x\right) -\widetilde{f}\left(
x\right) \\
&=&\frac{1}{\pi }\sum_{m=0}^{\left[ r/2\right] }\int_{\frac{2m\pi }{r}}^{%
\frac{2m\pi }{r}+\frac{\pi }{r}}\psi _{x}\left( t\right) \sum_{k=0}^{\infty
}a_{n,k}\widetilde{D^{\circ }}_{k,1}\left( t\right) dt+\frac{1}{\pi }%
\sum_{m=0}^{\left[ r/2\right] -1}\int_{\frac{2m\pi }{r}+\frac{\pi }{r}}^{%
\frac{2\left( m+1\right) \pi }{r}}\psi _{x}\left( t\right)
\sum_{k=0}^{\infty }a_{n,k}\widetilde{D^{\circ }}_{k,1}\left( t\right) dt \\
&=&J_{3}^{\prime \prime }\left( x\right) +J_{4}\left( x\right)
\end{eqnarray*}%
and for an even $r$%
\begin{eqnarray*}
&&\widetilde{T}_{n,A}^{\text{ }}f\left( x\right) -\widetilde{f}\left(
x\right) \\
&=&\frac{1}{\pi }\sum_{m=0}^{\left[ r/2\right] -1}\int_{\frac{2m\pi }{r}}^{%
\frac{2m\pi }{r}+\frac{\pi }{r}}\psi _{x}\left( t\right) \sum_{k=0}^{\infty
}a_{n,k}\widetilde{D^{\circ }}_{k,1}\left( t\right) dt+\frac{1}{\pi }%
\sum_{m=0}^{\left[ r/2\right] -1}\int_{\frac{2m\pi }{r}+\frac{\pi }{r}}^{%
\frac{2\left( m+1\right) \pi }{r}}\psi _{x}\left( t\right)
\sum_{k=0}^{\infty }a_{n,k}\widetilde{D^{\circ }}_{k,1}\left( t\right) dt \\
&=&J_{3}^{\prime \prime \prime }\left( x\right) +J_{4}\left( x\right) .
\end{eqnarray*}%
Then,%
\begin{equation*}
\left\vert \widetilde{T}_{n,A}^{\text{ }}f\left( x\right) -\widetilde{f}%
\left( x\right) \right\vert \leq \left\vert J_{3}^{\prime \prime }\left(
x\right) \right\vert +\left\vert J_{3}^{\prime \prime \prime }\left(
x\right) \right\vert +\left\vert J_{4}\left( x\right) \right\vert .
\end{equation*}%
From the above proof 
\begin{equation*}
\left\vert J_{4}\left( x\right) \right\vert =O_{x}\left( \left( n+1\right)
^{\beta }\widetilde{\omega }\left( \frac{\pi }{n+1}\right) \right)
+O_{x}\left( \left( n+1\right) ^{1+\beta +\frac{1}{p}}A_{n,r}\widetilde{%
\omega }\left( \frac{\pi }{n+1}\right) \right) ,
\end{equation*}%
for $0<\gamma <\beta +\frac{1}{p}.$

Further, we can observe that the quantities $J_{3}^{\prime \prime }\left(
x\right) ,$ $J_{3}^{\prime \prime \prime }\left( x\right) $ are similar to
the quantities $\left\vert J_{3}\left( x\right) \right\vert ,$ $\left\vert
J_{3}^{\prime }\left( x\right) \right\vert $ from the before proof, the
differences are in the ranges of $m$ only. Therefore we obtain the same
estimates of these terms, immediately. Thus our proof is complete. $%
\blacksquare $

\end{document}